
\documentclass[a4j, 12pt, draft]{amsart}
\usepackage{url}
\usepackage{amssymb}
\usepackage{enumerate}
\usepackage{braket}
\usepackage{ascmac}
\usepackage{amsmath}
\usepackage{mathrsfs}	
\usepackage{bm}
\usepackage{color}

\setlength{\evensidemargin}{15pt}
\setlength{\oddsidemargin}{15pt}
\setlength{\marginparwidth}{36pt}
\setlength{\textwidth}{410pt}
\setlength{\textheight}{610pt}

\allowdisplaybreaks[4]

\theoremstyle{definition}
\newtheorem{definition}{Definition}[section]

\newtheorem*{ackname}{Acknowledgment}

\theoremstyle{theorem}
\newtheorem{theorem}{Theorem}

\newtheorem{lemma}
{Lemma}
\newtheorem{proposition}[definition]{Proposition}

\newtheorem{problem}{Problem}
\newtheorem*{theorem*}{Theorem}


\let\sl\l
\renewcommand\l{%
	\leavevmode
  \ifmmode
    \left
  \else
    \sl
  \fi}

\renewcommand{\Set}[2]{\left\{ #1 \; \middle| \; #2 \right\}}

\newcommand{\CC}{\mathbb{C}}
\newcommand{\RR}{\mathbb{R}}

\newcommand{\ZZ}{\mathbb{Z}}
\newcommand{\NN}{\mathbb{N}}

\newcommand{\e}{\varepsilon}
\newcommand{\s}{\sigma}

\newcommand{\Lam}{\Lambda}
\newcommand{\I}{\mathcal{I}}

\newcommand{\qquadt}{\qquad \qquad \qquad}
\newcommand{\qquadf}{\qquad \qquad \qquad \qquad}
\newcommand{\us}{\underset}
\newcommand{\te}{\operatorname{\tilde{\eta}}}
\newcommand{\meas}{\operatorname{meas}}
\newcommand{\Li}{\operatorname{Li}}

\renewcommand{\a}{\alpha}
\renewcommand{\b}{\beta}
\renewcommand{\r}{\right}

\renewcommand{\epsilon}{\varepsilon}
\renewcommand{\bar}{\overline}
\renewcommand{\phi}{\varphi}


\begin{document}

\title[Denseness of iterated integrals of $\log{\zeta(s)}$]{On the value-distribution of iterated integrals of the logarithm 
of the Riemann zeta-function I: denseness
}
\author[K.~Endo and S. Inoue]{Kenta Endo and Sh\={o}ta Inoue}

\address[K.~Endo]{Graduate School of Mathematics, Nagoya University, Chikusa-ku, Nagoya 464-8602, Japan.}
\email{m16010c@math.nagoya-u.ac.jp}

\address[S. Inoue]{Graduate School of Mathematics, Nagoya University, Chikusa-ku, Nagoya 464-8602, Japan.}
\email{m16006w@math.nagoya-u.ac.jp}

\subjclass[2010]{Primary 11M06, Secondary 11M26}

\keywords{Riemann zeta-function, value-distribution, denseness, critical line.}

\maketitle

\begin{abstract}
We consider iterated integrals of $\log\zeta(s)$ on certain vertical and horizontal lines.
Here, the function $\zeta(s)$ is the Riemann zeta-function.
It is a well known open problem whether or not the values of the Riemann zeta-function on the critical line 
are dense in the complex plane.
In this paper, 
we give a result for the denseness of the values of the iterated integrals on the horizontal lines.
By using this result, 
we obtain the denseness of the values of $\int_{0}^{t} \log \zeta(1/2 + it')dt'$ under the Riemann Hypothesis.
Moreover, we show that, for any $m\geq 2$, the denseness of the values of an $m$-times iterated  integral on the critical line is equivalent to the Riemann Hypothesis.
\end{abstract}


\section{\textbf{Introduction and statement of results}}
In the present paper, we give some results for the value-distribution of iterated integrals of the logarithm of 
the Riemann zeta-function $\zeta(s)$.
Many mathematicians have studied the value-distribution of the Riemann zeta-function and other $L$-functions.
Here we should mention two remarkable results which are starting points of those studies.

\begin{theorem*}[Bohr and Courant in 1914 \cite{BC1914}]
For fixed $\frac{1}{2} < \s \leq 1$, the set $\l\{ \zeta(\s + it) \mid t \in \RR \r\}$ is dense in the complex plane.
\end{theorem*}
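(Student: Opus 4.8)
The plan is to deduce the stated denseness from the (formally stronger) assertion that, for fixed $\tfrac12<\s\le1$, the set $\set{\log\zeta(\s+it)}{t\in\RR}$ is dense in $\CC$; this is enough because $\zeta(\s+it)=\exp\bigl(\log\zeta(\s+it)\bigr)$ and, for any dense $A\subset\CC$, one has $\CC\setminus\{0\}=\exp(\CC)=\exp\bigl(\overline{A}\bigr)\subseteq\overline{\exp(A)}$, so $\overline{\exp(A)}=\CC$. Fix such a $\s$, a target $w\in\CC$, and $\e>0$; the goal is a single $t\in\RR$ with $\bigl|\log\zeta(\s+it)-w\bigr|<\e$. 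The three ingredients I would combine are: (i) an approximation of $\log\zeta$ on the line $\Re s=\s$ by the Dirichlet polynomials $P_X(s):=\sum_{n\le X}\frac{\Lam(n)}{\log n}n^{-s}$; (ii) Kronecker's approximation theorem, applied to the numbers $\set{\tfrac{\log p}{2\pi}}{p\ \text{prime}}$, which are linearly independent over $\QQ$; and (iii) a ``reaching lemma'' resting on the divergence $\sum_p p^{-\s}=\infty$, which holds precisely because $\s\le1$.

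For (i), the input I would use is the mean-value bound
\[
\limsup_{T\to\infty}\frac1T\int_{2}^{T}\bigl|\log\zeta(\s+it)-P_X(\s+it)\bigr|^2\,dt\longrightarrow0\qquad\text{as}\ X\to\infty,
\]
valid for every $\s>\tfrac12$; essentially all of the analytic number theory sits here, since one must bound the influence of the zeros of $\zeta$ lying to the right of $\Re s=\tfrac12$ by a zero-density estimate and control the remainder by the second moment of $\zeta$ on such lines, and this is exactly why the theorem stops at $\s=\tfrac12$. In particular, for each fixed $\delta>0$ the exceptional set $\set{t\in[2,T]}{\bigl|\log\zeta(\s+it)-P_X(\s+it)\bigr|\ge\delta}$ has upper density tending to $0$ as $X\to\infty$.

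For (iii), write $\psi_n:=\sum_{p}v_p(n)\,\theta_p$ for a tuple of phases $(\theta_p)_{p\le X}$; I claim that for $X$ large enough one can choose such a tuple with
\[
\Bigl|\,\sum_{n\le X}\frac{\Lam(n)}{\log n}\,n^{-\s}e^{i\psi_n}-w\,\Bigr|<\tfrac{\e}{3}.
\]
Indeed, fixing $\theta_p=0$ for all primes $p$ below a large constant $Y$ pins down the contribution of the $Y$-smooth $n$ as a fixed real number of size $O_Y(1)$ and, for $Y$ large, makes the total contribution of the prime powers involving primes $>Y$ smaller than $\e/6$; since $\sum_{Y<p\le X}p^{-\s}\to\infty$ as $X\to\infty$ --- this divergence, hence the whole claim, being where $\s\le1$ is used --- a standard polygonal-path (``zigzag'') argument then chooses the remaining phases $(\theta_p)_{Y<p\le X}$ so that $\sum_{Y<p\le X}p^{-\s}e^{i\theta_p}$ lies within $\e/6$ of $w$ minus that fixed part. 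With the phases so chosen, (ii) and Kronecker's theorem produce a set of $t\in\RR$ of positive lower density on which $\bigl\|\tfrac{t}{2\pi}\log p+\tfrac{\theta_p}{2\pi}\bigr\|<\eta$ for all primes $p\le X$ simultaneously; for such $t$, with $\eta$ small in terms of $X$ and $\e$, one has $n^{-it}=e^{i\psi_n}\bigl(1+O(\eta\log X)\bigr)$ for all $n\le X$, hence $\bigl|P_X(\s+it)-w\bigr|<\tfrac{2\e}{3}$ throughout that set.

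Finally, one intersects this positive-density set of ``Kronecker-good'' $t$ with the complement of the density-zero exceptional set of (i), taking $\delta<\e/3$: once $X$ is large enough both conditions hold on a common set of positive density, and any $t$ in it satisfies $|\log\zeta(\s+it)-w|<\e$, proving the denseness of $\log\zeta(\s+it)$ and therefore of $\zeta(\s+it)$. The step I expect to be the real obstacle is precisely this last balancing of scales: the positive density furnished by Kronecker in (ii)--(iii) deteriorates as $X\to\infty$, whereas the exceptional density in (i) only becomes negligible in the limit $X\to\infty$, so one must either establish (i) with an effective rate in $X$ or --- as in Bohr's original treatment --- recast it as an almost-periodicity statement in which the approximation holds uniformly in $t$ outside a set whose (small) density is bounded explicitly in terms of $X$, so that the two estimates can be reconciled.
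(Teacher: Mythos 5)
Your opening reduction is exactly the paper's own (implicit) treatment of this statement: it is cited as an immediate consequence of Bohr's 1916 theorem on $\log\zeta(\s+it)$ via continuity of $\exp$, and the three ingredients you then assemble for the logarithm --- mean-square approximation by a Dirichlet polynomial over primes, Kronecker/Weyl for the numbers $(\log p)/2\pi$, and a polygon (``zigzag'') lemma resting on $\sum_p p^{-\s}=\infty$ --- are precisely the architecture the paper uses for its Theorem \ref{DTtetam} (Propositions \ref{ALD} and \ref{DLm}, Lemmas \ref{ELG} and \ref{AKT}). However, the step you flag at the end is a genuine gap, not merely a technical nuisance: as written, the set of ``Kronecker-good'' $t$ has lower density of order $\eta^{\pi(X)}$, which collapses to $0$ as $X\to\infty$, while the exceptional density in your ingredient (i) is only known to be $o(1)$ in $X$ with no rate. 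For any \emph{fixed} $X$ you therefore cannot rule out that the exceptional set swallows the Kronecker-good set, and neither of your two suggested remedies (an effective rate in (i), or recasting via almost periodicity) is carried out.

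The standard repair --- the one executed in the paper's proof of Theorem \ref{DTtetam} --- is to decouple the two roles played by $X$. Fix a block of primes $p\le M$ with $M=M(w,\s,\e)$ only, and use the zigzag lemma together with continuity to produce a Jordan measurable set $\Theta_1\subset[0,1)^{\pi(M)}$ of phases, of some fixed measure $\delta_M>0$, on which the $M$-truncated sum is within $\e$ of the target. For the remaining primes $M<p\le N$, a mean-square computation over the torus plus Chebyshev shows their contribution is $<\e$ on a set $\Theta_2\subset[0,1)^{\pi(N)-\pi(M)}$ of measure $>1/2$, uniformly in $N$. The product $\Theta_1\times\Theta_2$ has measure $\ge\delta_M/2$ \emph{independent of $N$}, so Lemma \ref{AKT} gives a set of good $t$ of density $\ge\delta_M/2$ for every $N>M$; one then takes $N$ (hence $X$) so large that the exceptional density from (i) drops below $\delta_M/4$, and the two sets must intersect. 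With this modification your argument closes; without it, the final intersection is unjustified.
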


\begin{theorem*}[Bohr in 1916 \cite{B1916}]
For fixed $\frac{1}{2} < \s \leq 1$, the set $\l\{ \log{\zeta(\s + it)} \mid t \in \RR \r\}$ is dense in the complex plane.
\end{theorem*}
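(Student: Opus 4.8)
The plan is to reduce the assertion to a statement about Dirichlet polynomials over the primes and then invoke Kronecker's approximation theorem. For $\Re s>1$ one has $\log\zeta(s)=\sum_{p}\sum_{k\ge 1}\frac1k p^{-ks}$; the terms with $k\ge 2$ form a Dirichlet series that converges absolutely and uniformly on every half-plane $\Re s\ge\sigma_{0}$ with $\sigma_{0}>\tfrac12$, so they contribute only a bounded holomorphic function there and are irrelevant to the denseness question. Thus the heart of the matter is the ``prime part'' $\sum_{p}p^{-s}$, and the point that makes the theorem true in this range is that the coefficient sum $\sum_{p}p^{-\s}$ \emph{diverges} (it does so precisely for $\s\le 1$), so that the relevant values can fill the whole plane rather than only a bounded region.

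Next I would establish the Diophantine input. The numbers $\{\log p:p\le X\}$ are linearly independent over $\QQ$ by unique factorisation, so by Kronecker's theorem the orbit $\set{(p^{-it})_{p\le X}}{t\in\RR}$ is dense in the torus $\TT^{\pi(X)}$; consequently the closure of $\set{\sum_{p\le X}p^{-\s-it}}{t\in\RR}$ equals $\set{\sum_{p\le X}p^{-\s}z_{p}}{z_{p}\in\CC,\ |z_{p}|=1}$. Since $\sum_{p}p^{-\s}=\infty$ while $\max_{p\le X}p^{-\s}\to 0$ as $X\to\infty$, an elementary reachability argument shows that for any $a\in\CC$ and any $\e>0$ there is an $X_{0}$ such that for all $X\ge X_{0}$ the latter set contains the disc $\{w:|w-a|<\e\}$. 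Using equidistribution to quantify Kronecker's theorem, the set of $t$ with $|\sum_{p\le X}p^{-\s-it}-a|<\e$ then has positive lower density in $[0,\infty)$.

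Finally --- and this is where the real work lies --- one must transfer this back to $\log\zeta(\s+it)$. I would use a mean-value approximation: for fixed $\s\in(\tfrac12,1]$ and any $\delta>0$,
\[
\limsup_{T\to\infty}\frac1T\,\meas\set{t\in[0,T]}{\Bigl|\log\zeta(\s+it)-\textstyle\sum_{p^{k}\le X}\frac1k p^{-k(\s+it)}\Bigr|\ge\delta}
\]
can be made as small as we wish by taking $X$ large. This rests on second-moment bounds for $\zeta$ and $\zeta'/\zeta$ on vertical lines in the strip $\tfrac12<\Re s\le 1$ (obtained via the approximate functional equation or Carlson-type mean-value theorems), together with a zero-density estimate ensuring that, outside a set of $t$ of small density, $\log\zeta(\s+it)$ is well defined and of moderate size. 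Granting this approximation, the argument closes: the set of $t\in[0,T]$ that realises the target $a$ for the Dirichlet polynomial has positive density, the set of $t$ on which the polynomial well approximates $\log\zeta(\s+it)$ has density close to $1$, so the two meet and yield $t$ with $|\log\zeta(\s+it)-a|<2\e$ (the bounded $k\ge 2$ part of the polynomial being a harmless perturbation absorbed by enlarging $X$). The main obstacle is exactly this mean-value approximation step --- making it uniform in $X$ and reconciling the density bookkeeping in the final intersection --- while the Kronecker input and the handling of the prime-power tail are routine.
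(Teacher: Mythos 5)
Your route is exactly the one the paper itself uses (and attributes to Bohr): a mean-square approximation of the target function by a Dirichlet polynomial over primes, Kronecker/Weyl equidistribution of $(t\log p/2\pi)_p$ on the torus, and an elementary reachability argument exploiting the divergence of $\sum_p p^{-\sigma}$. This is precisely the structure of Propositions \ref{ALD} and \ref{DLm} and Lemma \ref{AKT}, specialized to $m=0$, and the overall plan is sound.

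One step, however, would fail as literally written: the claim that the $k\ge 2$ prime-power terms are ``a harmless perturbation absorbed by enlarging $X$.'' The quantity $\sum_{p}\sum_{k\ge 2}\frac{1}{k}p^{-k(\sigma+it)}$ is a \emph{fixed} bounded function of $t$, of size comparable to $\log\zeta(2\sigma)$, and it does not shrink as $X$ grows; moreover it oscillates with $t$ through the same phases $p^{-it}$ you are steering. So if you aim only the first-order sum $\sum_{p\le X}p^{-\sigma-it}$ at $a$, the full polynomial lands somewhere in a disc of fixed positive radius about $a$, which is not enough for denseness. The standard repair --- carried out in the paper's proof of Proposition \ref{DLm} via the auxiliary parameter $U$, the frozen reference angles $\underline{\theta}^{(0)}$, and the constant $\gamma_{m,\sigma}$ --- is to fix the angles of the small primes $p\le U$ once and for all, precompute their entire (all $k$) contribution as a constant $\gamma$, and then use Kronecker only on the primes $U<p\le N$, aiming their $k=1$ sum at $a-\gamma$; for these large primes the $k\ge 2$ contribution is $O\bigl(\sum_{p>U}p^{-2\sigma}\bigr)=o(1)$ as $U\to\infty$, since $\sigma>\tfrac12$. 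With this splitting inserted, and with the classical mean-value theorem for $\log\zeta(\sigma+it)$ minus its Dirichlet polynomial (which for $\tfrac12<\sigma<1$ does require the zero-density input you mention), your argument closes correctly.
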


Note that the latter theorem is an improvement of former one since the former 
is an immediate consequence from the latter.
As developments of these theorems, the Bohr-Jessen limit theorem \cite{BJ1930}, Selberg's limit theorem \cite{S1992},
and Voronin's universality theorem \cite{V1975} are well known.
By these theorems, we can understand some properties of $\zeta(s)$ such as the exact value-distribution of $\zeta(s)$ and 
the complexity of the behavior of $\zeta(s)$ in the critical strip.
As further developments of these results, 
there are many studies such as \cite{BH1995}, \cite{HM1999}, \cite{JW1935}, \cite{LLR2019}, \cite{RA2011}.

Here, we mention some known facts for the denseness of the values $\zeta(\s+it)$ 
for $t \in \RR$.
In the case $\s > 1$ fixed, the values $\zeta(\s+it)$ is bounded.
As for the case $\sigma < 1/2$, 
it has been proved by Garunk\u{s}tis and Steuding \cite{GS2014} that the values $\zeta(\sigma + it)$ for $t \in \RR$ are not dense in the complex plane under the Riemann Hypothesis. Additionally, as we mentioned above, the denseness in the case $1/2 < \s \leq 1$ has been proved.
Hence, the remaining problem for the denseness is only the following.

\begin{problem}	\label{DP}
Is the set $\l\{ \log{\zeta(1/2 + it)} \mid t \in \RR \r\}$ dense in the complex plane?
\end{problem}

For Problem \ref{DP}, there is an interesting study by Kowalski and Nikeghbali \cite{KN2012}.
They studied the Fourier transform of the probability measure which represents the probability of $\log{\zeta(1/2+it)} \in A$ 
with $A$ a Borel set. 
In particular, they gave a sufficient condition that the values $\zeta(1/2+it)$ for $t \in \mathbb{R}$ 
are dense in the complex plane (see \cite[Corollary 9]{KN2012}).
Hence, from their study, we might guess that the answer for Problem \ref{DP} could be yes.
However, as they mentioned in their paper \cite{KN2012}, their sufficient condition is rather strong. 
Therefore, it is also not strange that the answer for Problem \ref{DP} could be no.
Moreover, Garunk\u{s}tis and Steuding \cite{GS2014} showed that 
the set of $(\zeta(1/2+it), \zeta'(1/2+it))$ for $t \in \mathbb{R}$ is not dense in $\CC^{2}$.
As we can see from these works, it seems difficult to decide clearly the answer of Problem \ref{DP} at present. 
Hence, it is desirable to obtain some new information for this problem, 
and we give a new information for this problem in this paper.

In order to give new information of this theme,
we consider the function $\eta_{m}(s)$ defined by
\begin{align*}
\eta_{m}(\s + it)
= \int_{0}^{t}\eta_{m-1}(\s + it')dt' + c_{m}(\s), 
\end{align*}
where 
\begin{gather*}
\eta_{0}(\s + it)
= \log{\zeta(\s + it)}, \\
c_{m}(\s)
= \frac{i^{m}}{(m-1)!}\int_{\s}^{\infty}(\a - \s)^{m-1}\log{\zeta(\a)}d\a.
\end{gather*}
The second author studied this function and gave some results in \cite{II2019}.
In the present paper we discuss the topic related to \cite[Section 2.4]{II2019}.
Since the function $\eta_{m}(s)$ is the $m$-times iterated integral of $\log{\zeta(s)}$ on the vertical line, 
we can expect that the function has the information of the value-distribution of $\log{\zeta(s)}$.
In particular, 
since $\eta_{m}(1/2+it)$ is the iterated integral on the critical line, 
the study of the value-distribution of this function could be expected to give the new information on Problem \ref{DP}.
From this background, we study the value-distribution of the function $\eta_{m}(s)$ to give the following theorem. 

\begin{theorem}	\label{DTetam}Let $1/2 \leq \s < 1$. 
If the number of zeros $\rho = \b + i\gamma$ of $\zeta(s)$ with $\b > \s$ is finite, then the set 
\begin{align*}
\l\{ \int_{0}^{t}\log{\zeta(\s + it')}dt' \; \Big{|} \; t \in [0, \infty) \r\}
\end{align*}
is dense in the complex plane.
Moreover, for each integer $m \geq 2$, the following statements are equivalent.
\begin{itemize}
\item[(I)] The Riemann zeta-function does not have any zero whose real part is greater than $\s$.
\item[(II)] The set
$
\l\{ \eta_{m}(\s + it) \mid t \in [0, \infty) \r\}
$
is dense in the complex plane.
\end{itemize}
\end{theorem}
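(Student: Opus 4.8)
The plan is to reduce the statement to the denseness of the \emph{horizontal} iterated integral
\[
\theta_m(\sigma+it):=\frac{i^{m}}{(m-1)!}\int_{\sigma}^{\infty}(\alpha-\sigma)^{m-1}\log\zeta(\alpha+it)\,d\alpha
\]
(so that $\theta_m(\sigma)=c_m(\sigma)$), whose denseness of values is the horizontal-line theorem of this paper, and then to control the discrepancy caused by zeros of $\zeta$ to the right of $\Re s=\sigma$. Two facts are used. First, in the open half-plane $\Re s>\sigma$ both $\eta_m$ and $\theta_m$ satisfy $f'=-i\,f_{m-1}$ with $f(\sigma+it)\to0$ as $\sigma\to+\infty$; hence, if $\zeta$ has no zero with $\Re\rho>\sigma$, then $\eta_m$ and $\theta_m$ are the same $m$-fold iterated antiderivative of $\log\zeta$ in that zero-free half-plane, and by continuity $\eta_m(\sigma+it)=\theta_m(\sigma+it)$ for all $t$. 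Second, with the branch of $\log\zeta(\sigma+it)$ defined, as usual, by continuous variation of the argument along $2\to2+it\to\sigma+it$, this branch jumps by $2\pi i\,\nu(\rho)$ as $t$ crosses the ordinate $\gamma$ of a zero $\rho=\beta+i\gamma$ of multiplicity $\nu(\rho)$ with $\beta>\sigma$. Differentiating the defining integral of $\eta_m$ in $\sigma$, using $\partial_\sigma\log\zeta=\zeta'/\zeta$ together with these jumps and the exact cancellation of the $c_m'(\sigma)$-term, gives the Littlewood-type identity
\[
\frac{\partial}{\partial\sigma}\eta_m(\sigma+it)=-i\,\eta_{m-1}(\sigma+it)-\frac{2\pi}{(m-1)!}\sum_{\substack{\rho=\beta+i\gamma\\ 0<\gamma<t,\ \beta>\sigma}}\nu(\rho)\,(t-\gamma)^{m-1},
\]
together with its counterpart $\partial_\sigma\theta_m=-i\,\theta_{m-1}$.

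Subtracting the two identities and integrating in $\sigma$ over $(\sigma,+\infty)$, where $\eta_m-\theta_m\to0$, an induction on $m$ yields a formula
\[
\eta_m(\sigma+it)=\theta_m(\sigma+it)+2\pi\sum_{\substack{\rho=\beta+i\gamma\\ 0<\gamma<t,\ \beta>\sigma}}\nu(\rho)\,Q_m\!\bigl(\beta-\sigma,\ t-\gamma\bigr),
\]
where $Q_m(x,y)$ is a polynomial, homogeneous of degree $m$, every monomial of which has $x$-degree $\geq1$ and exactly one of which, $\tfrac{1}{(m-1)!}x\,y^{m-1}$, has $x$-degree $1$ (thus $Q_1(x,y)=x$, $Q_2(x,y)=xy+\tfrac{i}{2}x^{2}$, and so on). The theorem now follows. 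If $\zeta$ has no zero with real part $>\sigma$, the sum is empty, $\eta_m(\sigma+it)=\theta_m(\sigma+it)$, and its values are dense by the horizontal-line theorem; this is (I)$\Rightarrow$(II), and the case $m=1$ gives the first assertion when there are no such zeros. If there are only finitely many zeros with $\beta>\sigma$, then for $m=1$ the correction $2\pi\sum\nu(\rho)(\beta-\sigma)$ is a constant once $t$ exceeds all the ordinates involved, so for large $t$ the set $\{\eta_1(\sigma+it)\}$ is a translate of the dense set $\{\theta_1(\sigma+it)\}$ and is again dense; this completes the first assertion.

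For (II)$\Rightarrow$(I) I argue by contraposition. Suppose $\zeta$ has a zero $\rho_0=\beta_0+i\gamma_0$ with $\beta_0>\sigma$; replacing $\rho_0$ by $\overline{\rho_0}$ if necessary, take $\gamma_0>0$. On one hand, the crude bounds $|\zeta(\alpha+it)|\leq t^{C}$ for $\alpha\geq1/2$ and $\int_{\sigma}^{2}\bigl|\log|\zeta(\alpha+it)|\bigr|\,d\alpha\ll\log t$ (a standard consequence of Jensen's formula and $N(t+1)-N(t)\ll\log t$) give $\theta_m(\sigma+it)\ll\log t$. On the other hand, for $m\geq2$ a direct estimate of $Q_m$ — in which $\tfrac{1}{(m-1)!}x\,y^{m-1}$ dominates the monomials of $x$-degree $\geq2$, these carrying higher powers of the small quantity $\beta-\sigma\in(0,1/2)$ — shows the real part of the correction to be at least $\tfrac{2\pi}{(m-1)!}\nu(\rho_0)(\beta_0-\sigma)(t-\gamma_0)^{m-1}+O(\log t)$. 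Hence $\Re\,\eta_m(\sigma+it)\to+\infty$ as $t\to\infty$; since $t\mapsto\eta_m(\sigma+it)$ is continuous on $[0,\infty)$ (being an iterated integral), its real part is bounded below there, so the values $\eta_m(\sigma+it)$ lie in a half-plane and cannot be dense. Together with (I)$\Rightarrow$(II) this gives the equivalence for every $m\geq2$.

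The substance of the argument is in the first two paragraphs: fixing the branch of $\log\zeta$ and making the Littlewood-type identity and the polynomial $Q_m$ precise — in effect a bookkeeping of the monodromy of $\eta_m$ across the horizontal slits issuing from the zeros — and proving the uniform bound $\theta_m(\sigma+it)\ll\log t$; I also regard the horizontal-line denseness theorem invoked above as a separate substantial ingredient. The point needing extra care is $\sigma=1/2$, where zeros on the critical line make $\zeta'/\zeta$ non-integrable along $\Re s=1/2$: there one first establishes the identities for $\sigma\in(1/2,1)$ and then lets $\sigma$ decrease to $1/2$.
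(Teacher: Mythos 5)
Your proposal is essentially the paper's own proof: your identity $\eta_m=\theta_m+2\pi\sum_\rho\nu(\rho)Q_m(\beta-\sigma,t-\gamma)$ with $\theta_m=i^m\tilde{\eta}_m$ and $Q_m(x,y)=\sum_{k=0}^{m-1}\frac{i^{m-1-k}}{(m-k)!\,k!}x^{m-k}y^{k}$ is exactly Lemma~\ref{BFQm} (quoted there from \cite{II2019} rather than re-derived), the denseness input is Theorem~\ref{DTtetam}, the $m=1$ case is the same ``eventually constant translate'' observation, and (II)$\Rightarrow$(I) is the same comparison of the polynomially growing zero-sum against the bound $\theta_m\ll\log t$ of Lemma~\ref{BFeY}. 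One quantitative caveat in your (II)$\Rightarrow$(I): when (I) fails there may be infinitely many zeros with $\beta>\sigma$, so the non-leading monomials of $Q_m$ summed over all of them are not $O(\log t)$; one needs a zero-density estimate (e.g.\ Selberg's) to make that contribution $o(t^{m-1})$, after which your conclusion $\Re\eta_m(\sigma+it)\to+\infty$ still holds since the leading monomials are nonnegative real --- the paper is equally terse at the corresponding step.
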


From this theorem, we see that the Riemann Hypothesis implies that the set
\begin{align*}
\l\{ \int_{0}^{t}\log{\zeta(1/2 + it')}dt' \; \Big{|} \; t \in [0, \infty) \r\}
\end{align*}
is dense in the complex plane. 
This implication seems to suggest that the answer of Problem \ref{DP} is yes.
Moreover, the equivalence as above would be a new type of statement which gives the relation between the denseness of values of 
the Riemann zeta-function and the Riemann Hypothesis. 

Here, we mention the plan of the proof of Theorem \ref{DTetam} briefly. 
We introduce the function $\tilde{\eta}_{m}(\s+it)$ recursively by  
\begin{align*}
\tilde{\eta}_{m}(\s+it) = \int_{\s}^{\infty}\tilde{\eta}_{m-1}(\a+it)d\a, 
\end{align*}
where $\tilde{\eta}_{0}(\s+it) = \log{\zeta(\s+it)}$.
This function is the $m$-times iterated integral of $\log{\zeta(\s+it)}$ on the horizontal line. 
The greatest interest of ours in this paper is the answer of Problem \ref{DP} and the value-distribution of $\eta_m(1/2 + it)$.
However, the function $\te_m(s)$ is regular in the same region as in the case of $\log{\zeta(s)}$, 
and also some properties of this function are similar to $\log \zeta(s)$.
From this observation, 
this function would be an interesting object itself,
and we obtain the following theorem unconditionally.

\begin{theorem}	\label{DTtetam}
Let $1/2 \leq \s < 1$, and $m$ be a positive integer. 
Let $T_{0}$ be any positive number.
Then the set
\begin{align*}
\l\{ \te_{m}(\s+it) \mid t \in [T_{0}, \infty) \r\}
\end{align*}
is dense in the complex plane.
\end{theorem}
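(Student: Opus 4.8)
The plan is to prove a probabilistic limit theorem for $t\mapsto\te_m(\s+it)$ and to combine it with a full-support statement, deducing denseness in the usual way. By Fubini's theorem one first rewrites
\[
\te_m(\s+it)=\frac{1}{(m-1)!}\int_{\s}^{\infty}(\a-\s)^{m-1}\log\zeta(\a+it)\,d\a,
\]
an absolutely convergent integral, since $\log\zeta(\a+it)$ is locally integrable in $\a$ (it has at worst logarithmic singularities, at the real parts of zeros lying on the line $\Im s=t$) and decays like $2^{-\a}$ as $\a\to\infty$. Next introduce the probabilistic model: let $(X_p)_p$ be independent random variables indexed by the primes, each uniform on the unit circle, and set
\[
Z_\s:=\sum_p g_p(X_p),\qquad g_p(z):=\sum_{k\geq1}\frac{z^k p^{-k\s}}{k^{m+1}(\log p)^m}.
\]
Because $\mathbb{E}|g_p(X_p)|^2\asymp p^{-2\s}(\log p)^{-2m}$ and $\sum_p p^{-1}(\log p)^{-2m}<\infty$ (here $m\geq1$), this series converges a.s. and in $L^2$. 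The coefficients are chosen so that $Z_\s$ is the ``random Euler product'' analogue of $\te_m(\s+it)$: since $\int_\s^\infty(\a-\s)^{m-1}p^{-k\a}\,d\a=(m-1)!\,p^{-k\s}(k\log p)^{-m}$, integrating the random series $\sum_{p,k}X_p^k/(kp^{k\a})$ against the weight $(\a-\s)^{m-1}/(m-1)!$ reproduces $\sum_p g_p(X_p)$.

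The first main step is $\operatorname{supp}(Z_\s)=\CC$. Since $Z_\s$ is a convergent sum of independent $\CC$-valued variables, its support is the closure of the Minkowski sum $\overline{\sum_p\gamma_p}$, where $\gamma_p:=g_p(\{|z|=1\})$. For $p\geq5$ one has $|g_p(z)-p^{-\s}(\log p)^{-m}z|\leq p^{-2\s}(1-p^{-\s})^{-1}(\log p)^{-m}<p^{-\s}(\log p)^{-m}$ on $|z|=1$, so by Rouch\'e's theorem $\gamma_p$ winds once around $0$ and lies in an annulus of radii $\asymp r_p:=p^{-\s}(\log p)^{-m}$. As $\s<1$ we have $\sum_p r_p=\infty$ while $r_p\to0$, and the classical argument on Minkowski sums of curves encircling the origin (the one that identifies the support of the Bohr--Jessen limit distribution of $\log\zeta$ on vertical lines in $(1/2,1]$) then gives $\overline{\sum_p\gamma_p}=\CC$, the finitely many small primes contributing only a bounded perturbation. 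This is exactly where the hypothesis $\s<1$ is used.

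The second main step is the limit theorem: $\frac1T\int_{T_0}^{T_0+T}f(\te_m(\s+it))\,dt\to\mathbb{E}[f(Z_\s)]$ for every bounded continuous $f\colon\CC\to\RR$. I would prove it by truncation. For $0<\eta<L$ let $\te_m^{\eta,L}(\s+it)$ denote the same integral restricted to $\a\in[\max(\s,\tfrac12+\eta),L]$. The tail $\a>L$ contributes $O\big(\int_L^\infty(\a-\s)^{m-1}2^{-\a}\,d\a\big)$ uniformly in $t$, and the range $\s\leq\a\leq\tfrac12+\eta$ (present only when $\s=\tfrac12$) is controlled, via Minkowski's integral inequality, by the mean-value bound $\frac1T\int_{T_0}^{T_0+T}|\log\zeta(\a+it)|^2\,dt\ll\log\frac{3}{2\a-1}$, uniform for $\tfrac12<\a\leq1$ and $T\geq1$; since $\int_{1/2}^{3/4}(\log\frac{3}{2\a-1})^{1/2}\,d\a<\infty$, this range contributes $O(\eta(\log\tfrac1\eta)^{1/2})$ in $L^2(dt/T)$, uniformly in $T$. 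For fixed $\eta,L$ one approximates $\te_m^{\eta,L}(\s+it)$ in $L^2(dt/T)$ by a Riemann sum $\sum_j w_j\log\zeta(\a_j+it)$ with all $\a_j\in(1/2,\infty)$ (using that $\frac1T\int_0^T|\log\zeta(\a+it)-\log\zeta(\a'+it)|^2\,dt$ is small for $|\a-\a'|$ small, which follows from convergence of these second moments), and then invokes the classical joint limiting distribution of $(\log\zeta(\a_1+it),\dots,\log\zeta(\a_J+it))$ on vertical lines in $(1/2,\infty)$, namely that of $(\log\zeta(\a_1,X),\dots,\log\zeta(\a_J,X))$. Assembling these estimates by a routine $3\e$-argument, and using $\sum_jw_j\log\zeta(\a_j,X)\to Z_\s$ in $L^2$ as $\eta\to0$, $L\to\infty$, mesh$\to0$, gives the claim.

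Combining the two steps: given $w\in\CC$ and $\e>0$, pick a continuous $f$ with $0\leq f\leq1$, $f\equiv1$ on $D(w,\e/2)$ and $\operatorname{supp}f\subseteq D(w,\e)$; then $\liminf_{T\to\infty}\frac1T\meas\{t\in[T_0,T_0+T]:|\te_m(\s+it)-w|<\e\}\geq\mathbb{E}[f(Z_\s)]\geq\mathbb{P}(Z_\s\in D(w,\e/2))>0$ since $\operatorname{supp}(Z_\s)=\CC$. Hence $\{t\geq T_0:|\te_m(\s+it)-w|<\e\}$ is nonempty (indeed of positive lower density), and as $w,\e$ are arbitrary the set $\{\te_m(\s+it):t\in[T_0,\infty)\}$ is dense in $\CC$. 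I expect the genuine difficulty to be the uniform near-critical-line mean-value estimate $\frac1T\int_{T_0}^{T_0+T}|\log\zeta(\a+it)|^2\,dt\ll\log\frac{3}{2\a-1}$ as $\a\downarrow\tfrac12$ — equivalently, bounding the contribution of $\a$ near $\tfrac12$ to $\te_m(\tfrac12+it)$ — since this is the one point at which the argument leaves the classical terrain of vertical lines bounded away from the critical line; the remaining discretization and bookkeeping in the limit theorem, though tedious, should be routine once that estimate is in hand.
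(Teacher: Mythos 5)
Your architecture is a genuinely different route from the paper's: instead of approximating $\te_{m}(\s+it)$ in measure by a Dirichlet polynomial and then combining Kronecker's approximation theorem with an explicit polygon construction on the torus (the roles of Propositions \ref{ALD} and \ref{DLm}), you prove convergence in distribution to a random series $Z_\s$ and show that $Z_\s$ has full support. The full-support step (annuli winding around the origin, $\sum_p p^{-\s}(\log p)^{-m}=\infty$ for $\s<1$) and the discretization over $\a\in[1/2+\eta,L]$ via the classical Bohr--Jessen joint limit theorem are sound. However, there is one genuine gap, and it sits exactly at the point that makes the theorem nontrivial at $\s=1/2$: the uniform mean-value estimate
\begin{align*}
\frac1T\int_{T_0}^{T_0+T}\l|\log\zeta(\a+it)\r|^2\,dt\ll\log\frac{3}{2\a-1}\qquad(1/2<\a\le 1,\ T\ \text{large})
\end{align*}
is asserted, not proved. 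With the required uniformity as $\a\downarrow1/2$ this is not citable classical terrain: it is a Selberg-type theorem whose proof needs a strong zero-density estimate $N(\s,T)\ll T^{1-c(\s-1/2)}(\log T)^{A}$ to control the contribution of zeros close to the critical line, i.e.\ precisely the input the paper channels through Theorem 5 of \cite{II2019} in Lemma \ref{ML}, and precisely the point at which the paper says the case $\s=1/2$ differs in depth from $1/2<\s<1$. You correctly located the difficulty, but reducing the theorem to an unproven estimate of the same depth as the paper's main technical lemma leaves the case $\s=1/2$ open; for $1/2<\s<1$ your truncation makes the near-critical range empty and the argument is essentially complete as written.

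Once that estimate is supplied, your route does buy something the paper's statement does not make explicit, namely positive lower density of $\{t:|\te_m(\s+it)-w|<\e\}$, and it is better suited to generalization (the authors themselves announce such a probabilistic version in Section 6). The paper's route trades the limit theorem for an in-measure approximation plus Kronecker's theorem, which is lighter on the probabilistic side but consumes the identical zero-density input.
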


Theorem \ref{DTetam} can be obtained from Theorem \ref{DTtetam} and the following lemma.

\begin{lemma}	\label{BFQm}
Let $m$ be a positive integer, and let $t > 0$.
Then, for any $\s \geq 1/2$, we have
\begin{align*}
\eta_{m}(\s + it)
= i^{m}\te_{m}(\s+it)
+ 2\pi \sum_{k = 0}^{m-1}\frac{i^{m-1-k}}{(m-k)! k!}\us{\b > \s}{\sum_{0 < \gamma < t}}(\b - \s)^{m-k}(t - \gamma)^{k}.
\end{align*}
\end{lemma}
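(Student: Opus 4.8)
The plan is to prove Lemma~\ref{BFQm} by induction on $m$, with all of the analysis packed into a single contour--integral identity for the horizontal iterated integrals $\te_{\ell}$. Specifically, I will first establish that for every integer $\ell\ge 0$, every $t>0$, and every $\s\ge 1/2$,
\begin{equation}
\int_{0}^{t}\te_{\ell}(\s+it')\,dt'
= i\bigl(\te_{\ell+1}(\s+it)-\te_{\ell+1}(\s)\bigr)
+\frac{2\pi}{(\ell+1)!}\us{\b>\s}{\sum_{0<\g<t}}(\b-\s)^{\ell+1},
\tag{$\ast$}
\end{equation}
where $\te_{\ell+1}(\s):=\te_{\ell+1}(\s+i0)$ and the sum, which is finite, runs over the zeros $\rho=\b+i\g$ of $\zeta$ with $\b>\s$ and $0<\g<t$.

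To prove $(\ast)$ I would integrate $\te_{\ell}(w)$ counterclockwise around the rectangle with vertices $\s,\ \s+it,\ R+it,\ R$ and let $R\to\infty$. Since $\te_{\ell}$ is regular off the same horizontal half-lines $\{x+i\g_{\rho}:x\le\b_{\rho}\}$ as $\log\zeta$ (as recalled before Theorem~\ref{DTtetam}), Cauchy's theorem applies to the rectangle slit along the finitely many segments $\{x+i\g_{\rho}:\s\le x\le\b_{\rho}\}$ attached to zeros with $\s<\b_{\rho}$ and $0<\g_{\rho}<t$ (the cut from the pole at $s=1$ lies on the real axis, hence on the bottom edge, and causes no trouble). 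By the defining relation $\te_{\ell+1}(s)=\int_{\Re s}^{\infty}\te_{\ell}(\a+i\Im s)\,d\a$, the bottom and top edges give $\te_{\ell+1}(\s)$ and $-\te_{\ell+1}(\s+it)$; the right edge is $o(1)$ because $\te_{\ell}(w)\ll 2^{-\Re w}$; the left edge is $-i\int_{0}^{t}\te_{\ell}(\s+it')\,dt'$. Around the slit at $\rho$ one picks up $\int_{\s}^{\b_{\rho}}\bigl(\te_{\ell}(x+i\g_{\rho}+i0)-\te_{\ell}(x+i\g_{\rho}-i0)\bigr)\,dx$; since $\log\zeta$ jumps by $2\pi i$ across such a cut and $\te_{\ell}(x+it)=\frac{1}{(\ell-1)!}\int_{x}^{\infty}(\a-x)^{\ell-1}\log\zeta(\a+it)\,d\a$, the jump of $\te_{\ell}$ there is $\frac{2\pi i}{\ell!}(\b_{\rho}-x)^{\ell}$ (read as $2\pi i$ when $\ell=0$), so the slit contributes $\frac{2\pi i}{(\ell+1)!}(\b_{\rho}-\s)^{\ell+1}$. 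Summing the edge and slit terms and solving for the vertical integral gives $(\ast)$.

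Now the induction on $m$. For $m=1$, $\eta_{1}(\s+it)=\int_{0}^{t}\log\zeta(\s+it')\,dt'+c_{1}(\s)$ with $c_{1}(\s)=i\,\te_{1}(\s)$, so $(\ast)$ with $\ell=0$ gives at once $\eta_{1}(\s+it)=i\,\te_{1}(\s+it)+2\pi\us{\b>\s}{\sum_{0<\g<t}}(\b-\s)$, which is the case $m=1$. For $m\ge 2$, substitute the inductively known formula for $\eta_{m-1}(\s+it')$ into $\eta_{m}(\s+it)=\int_{0}^{t}\eta_{m-1}(\s+it')\,dt'+c_{m}(\s)$ and integrate termwise; using $\int_{0}^{t}\sum_{0<\g<t'}(\b-\s)^{m-1-k}(t'-\g)^{k}\,dt'=\sum_{0<\g<t}(\b-\s)^{m-1-k}\frac{(t-\g)^{k+1}}{k+1}$ and the substitution $j=k+1$, the coefficient reshapes as $\frac{i^{m-2-k}}{(m-1-k)!\,k!\,(k+1)}=\frac{i^{m-1-j}}{(m-j)!\,j!}$, so these terms reproduce exactly the summands $k=1,\dots,m-1$ of the formula claimed for $\eta_m$. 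What is left is the identity
\[
i^{m-1}\int_{0}^{t}\te_{m-1}(\s+it')\,dt'+c_{m}(\s)
= i^{m}\te_{m}(\s+it)+\frac{2\pi\,i^{m-1}}{m!}\us{\b>\s}{\sum_{0<\g<t}}(\b-\s)^{m};
\]
since the Cauchy formula for iterated integrals gives $\te_{m}(\s)=\frac{1}{(m-1)!}\int_{\s}^{\infty}(\a-\s)^{m-1}\log\zeta(\a)\,d\a$, i.e.\ $c_{m}(\s)=i^{m}\te_{m}(\s)$, dividing the displayed identity by $i^{m-1}$ turns it into exactly $(\ast)$ with $\ell=m-1$. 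This closes the induction and proves Lemma~\ref{BFQm}.

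The analytic input is light, so the real care lies in two places. First, one must pin down the cuts of $\te_{\ell}$ and compute the jump $\frac{2\pi i}{\ell!}(\b_{\rho}-x)^{\ell}$ correctly; in particular, when $\s=1/2$ the left edge of the rectangle runs through the finitely many critical--line zeros with $0<\g<t$, so that edge integral must be read as an improper integral with integrable singularities, and the cuts attached to those zeros merely touch the boundary without entering the interior --- a point to be noted rather than an obstacle. Second, one must keep the powers of $i$, the factorials, and the reindexing straight; the terms $\te_{j}(\s)$ coming from the constants $c_{j}(\s)$ have to cancel against the integration constants built into $\te_{j+1}(\s)$, as they do by the algebra behind $(\ast)$, and this cancellation is the easiest place to slip a sign.
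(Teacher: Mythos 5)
Your argument is correct, but it takes a genuinely different --- and much more self-contained --- route than the paper does. The paper disposes of Lemma~\ref{BFQm} in one line by quoting Lemma~1 of \cite{II2019}, which already expresses $\eta_m(\s+it)$ as $\frac{i^m}{(m-1)!}\int_\s^\infty(\a-\s)^{m-1}\log\zeta(\a+it)\,d\a$ plus the zero sum $Y_m(\s+it)$ of \eqref{def_Y}, and then identifying that integral with $\te_m(\s+it)$ via the Cauchy repeated-integration formula \eqref{etatF}. You instead rebuild the external input from scratch: your slit-rectangle identity $(\ast)$ is exactly the Littlewood-type mechanism underlying the cited lemma, and the induction on $m$ propagates it correctly --- I checked the jump $\frac{2\pi i}{\ell!}(\b-x)^{\ell}$ across a cut, the orientation of the slit contributions, the reindexing $j=k+1$ of the coefficients, and the cancellation of $i^m\te_m(\s)$ against $c_m(\s)$; the signs, powers of $i$, and factorials all come out right, and the base case $\ell=0$ of $(\ast)$ reproduces the $m=1$ statement. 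What your approach buys is independence from \cite{II2019}; what it costs is carrying the analytic bookkeeping (integrable logarithmic singularities on the left edge when $\s=1/2$, decay on the right edge, cuts that touch but do not enter the rectangle) that the citation hides. One convention should be made explicit: in the definition of $c_m(\s)$ the quantity $\log\zeta(\a)$ for real $\a\in[\s,1)$, where $\zeta(\a)<0$, must be read as the boundary value $\log\zeta(\a+i0)$; this is precisely what makes $c_m(\s)=i^m\te_m(\s+i0)$, hence the cancellation in your induction, exact, and it is also the convention forced by viewing the bottom edge of your contour as a limit from inside the rectangle. Likewise, for the countably many $t$ equal to an ordinate of a zero the top edge lies on a cut and $\te_m(\s+it)$ is to be read as a one-sided limit; this does not affect the statement.
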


This lemma immediately follows from Lemma $1$ in \cite{II2019} and the equation

\begin{align}	\label{etatF}
\te_{m}(\s+it)
= \frac{1}{(m-1)!}\int_{\s}^{\infty}(\a - \s)^{m-1}\log{\zeta(\a + it)}d\a.
\end{align}
This equation can be obtained easily by using integration by parts.
Hence, our first purpose is to show Theorem \ref{DTtetam}. 
In the proof of Theorem \ref{DTtetam}, the following two propositions play an important role.  

In the following, the symbol $\meas(\cdot)$ stands for the one-dimensional Lebesgue measure,
and $\Li_{m}(z)$ means the polylogarithmic function defined as $\sum_{n=1}^{\infty}\frac{z^{n}}{n^{m}}$
for $|z| < 1$.

\begin{proposition}	\label{ALD}
Let $m$ be a positive integer.
Then for any $\s \geq 1/2$, $T \geq X^{135}$, $\e > 0$, we have
\begin{align*}
\lim_{X \rightarrow +\infty}\frac{1}{T}
\meas \l\{ t \in [0, T] \; \Bigg{|} \; 
\bigg| \te_{m}(\s + it) - \sum_{p \leq X}\frac{\Li_{m+1}(p^{-\s - it})}{(\log{p})^{m}} \bigg| < \e \r\} = 1.
\end{align*}
\end{proposition}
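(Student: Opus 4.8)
The plan is to establish the approximation in two stages: first reduce $\te_m(\s+it)$ to a Dirichlet-series-like object supported on prime powers, and then truncate that object to primes only. For the first stage, I would start from the integral representation \eqref{etatF}, which writes $\te_m(\s+it)$ as a weighted integral of $\log\zeta(\a+it)$ over $\a\in[\s,\infty)$. In the region $\a>1$ one has the absolutely convergent expansion $\log\zeta(\a+it)=\sum_{p}\sum_{k\ge1}\frac{1}{k}p^{-k(\a+it)}$, and term-by-term integration against $\frac{(\a-\s)^{m-1}}{(m-1)!}$ produces exactly $\sum_{p}\sum_{k\ge1}\frac{1}{k^{m+1}(\log p)^{m}}p^{-k(\s+it)}$, i.e. $\sum_p \frac{\Li_{m+1}(p^{-\s-it})}{(\log p)^{m}}$; the extra factors of $1/k^{m}$ and $1/(\log p)^{m}$ come from integrating the exponential $m-1$ more times. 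So the target sum is the "correct" Dirichlet expansion of $\te_m$; the issue is only that \eqref{etatF} is not literally valid termwise for $\s<1$. To handle $1/2\le\s<1$ one brings in a standard mean-value / density machinery: for most $t\in[0,T]$, $\log\zeta(\a+it)$ is well approximated by a short Dirichlet polynomial over primes uniformly for $\a$ in the relevant range (this is exactly the kind of statement underlying Selberg's and Bohr--Jessen's work, and the papers \cite{S1992}, \cite{BJ1930} cited in the excerpt). Feeding such an approximation into \eqref{etatF} and carrying out the $\a$-integration gives $\te_m(\s+it)\approx \sum_{p\le Y}\frac{\Li_{m+1}(p^{-\s-it})}{(\log p)^{m}}$ for some power $Y=Y(T)$, outside a set of $t$ of measure $o(T)$.

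The second stage is to pass from the cutoff $Y=Y(T)$ (which may be a small power of $T$, dictated by the zero-density input) down to the fixed but large cutoff $X$ with $T\ge X^{135}$. This is where a genuine mean-square estimate is needed: I would bound
\begin{align*}
\frac{1}{T}\int_0^T \Bigl| \sum_{X< p\le Y}\frac{\Li_{m+1}(p^{-\s-it})}{(\log p)^{m}}\Bigr|^{2}\,dt
\end{align*}
by expanding the square and using the orthogonality-type estimate $\frac1T\int_0^T p_1^{-it}p_2^{it}\,dt = \delta_{p_1=p_2}+O\bigl(\frac{1}{T|\log(p_1/p_2)|}\bigr)$. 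Since $|\Li_{m+1}(p^{-\s-it})|\le \frac{p^{-\s}}{1-p^{-\s}}\ll p^{-\s}$, the diagonal contributes $\ll\sum_{p>X}p^{-2\s}(\log p)^{-2m}$, which for $\s\ge1/2$ is $\ll \sum_{p>X}p^{-1}(\log p)^{-2m}\ll (\log X)^{1-2m}\to0$ when $m\ge1$ — note the boundary case $m=1$ still gives $(\log X)^{-1}\to0$, and this is presumably why the statement needs $m\ge1$ and why the series $\Li_{m+1}$ (not $\Li_m$) appears. The off-diagonal terms are controlled by the $T\ge X^{135}$ hypothesis: with $Y$ a fixed small power of $T$ the pairs $p_1\ne p_2$ contribute $\ll Y^{2}/T$, which the large exponent $135$ renders negligible. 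Then Chebyshev's inequality converts the mean-square bound into a measure estimate: the set where $\bigl|\sum_{X<p\le Y}\cdots\bigr|\ge \e/2$ has measure $o_X(T)$, and this $o$ is uniform enough that letting $X\to\infty$ (hence $T\to\infty$, since $T\ge X^{135}$) kills it. Combining the two stages via the triangle inequality yields the proposition.

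The main obstacle, I expect, is the first stage: making rigorous the approximation of $\log\zeta(\a+it)$ by a prime Dirichlet polynomial uniformly in $\a\in[\s,\infty)$ for $1/2\le\s<1$, on a set of $t$ of full density. Near the critical line this genuinely requires a zero-density estimate (to control the contribution of zeros $\rho$ with $\Re\rho$ close to $1$, which is where $\log\zeta$ can be large) together with bounds on $\log\zeta$ on average, and one must be careful that after integrating over $\a$ from $\s$ to $\infty$ the tail $\a$ large is harmless (there $\log\zeta(\a+it)\to0$ rapidly) while the part $\s\le\a\le 1+\delta$ is the delicate range. The bookkeeping of which power $Y=Y(T)$ is admissible, and checking that it is at most a fixed power of $T$ small enough to be absorbed by the $X^{135}$ margin in stage two, is the other place where care is needed. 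The polylogarithm manipulations, the integration by parts behind \eqref{etatF}, and the second-moment computation are all routine by comparison.
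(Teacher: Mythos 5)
Your overall architecture (approximate $\te_m$ by a Dirichlet-type sum, compare that sum with the polylogarithm sum in mean square, finish with Chebyshev) is the same as the paper's, and your second stage is essentially the paper's Lemma \ref{PLvM} together with the standard orthogonality computation; that part is fine, modulo small inaccuracies (the diagonal for $\s=1/2$ is $\ll (\log X)^{-2m}$, not $(\log X)^{1-2m}$, but it still tends to $0$).

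The genuine gap is your first stage, which is the entire substance of the proposition and which you leave at the level of ``standard mean-value / density machinery.'' It is not standard. Approximating $\log\zeta(\a+it)$ by a prime Dirichlet polynomial \emph{pointwise, for almost all $t$, uniformly in $\a\ge\s$ down to $\s=1/2$} is exactly the deep input here: near the critical line one must control the contribution of zeros $\rho=\b+i\g$ with $\b>\s$ and $\g$ near $t$, and the paper stresses that this requires a Selberg-type zero-density estimate $N(\s,T)\ll T^{1-c(\s-1/2)}(\log T)^{A}$ with $A<m+1$ --- strictly stronger than what suffices for $\s>1/2$. You name this obstacle but do not supply the argument, so the proof does not close. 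The paper sidesteps the pointwise-in-$\a$ route entirely: it quotes a ready-made mean-square theorem (Theorem 5 of \cite{II2019}) for the iterated integral $\eta_m$ minus the prime-power polynomial $\sum_{2\le n\le X}\Lambda(n)n^{-\s-it}(\log n)^{-m-1}$ minus the zero-term $Y_m$, valid for $X\le T^{1/135}$, and then uses Lemma \ref{BFQm} to identify $\eta_m-Y_m=i^m\te_m$. This gives the length-$X$ approximation in one step, with no intermediate cutoff $Y(T)$ and no pointwise uniformity in $\a$ needed; the zero-density input is concentrated in the cited theorem. To repair your proposal you would either have to prove the uniform-in-$\a$ almost-everywhere approximation of $\log\zeta$ (essentially redoing Selberg's analysis and checking that the exceptional sets can be integrated over $\a$), or do as the paper does and work in mean square with the integrated object from the start.
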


The important point of this proposition is that $\te_{m}(s)$ can be approximated by the Dirichlet polynomial 
even on the critical line.
To prove this proposition, we must control exactly the contribution of nontrivial zeros of $\zeta(s)$, 
and we therefore need a strong zero density estimate of the Riemann zeta-function like Selberg's result \cite[Theorem 1]{SC1946}.
More precisely, we require that there exist numbers $c > 0$, $A < m + 1$ such that
\begin{align*}
N(\s, T) \ll T^{1 - c(\s - 1/2)} (\log{T})^{A}
\end{align*}
uniformly for $\frac{1}{2} \leq \s \leq 1$. 
Here, $N(\s, T)$ is the number of zeros of $\zeta(s)$ with multiplicity satisfying $\b > \s$ and $0 < \gamma \leq T$.
Therefore, to prove Proposition \ref{ALD}, we need a strong zero density estimate comparable to the assumption 
by Bombieri and Hejhal \cite{BH1995}.
On the other hand, when we discuss the denseness of $\te_{m}(s)$ for fixed $\frac{1}{2} < \s < 1$, 
it suffices to use the weaker estimate
\begin{align*}
N(\s, T) \ll T^{1 - c(\s - 1/2) + \e}
\end{align*}
for every $\e > 0$. 
Hence, there is an essential difference of the depth between the discussion in the case $\frac{1}{2} < \s < 1$ 
and that in the case $\s = \frac{1}{2}$ in Proposition \ref{ALD}.

In contrast, we can prove the following proposition by almost the same method as in \cite{B1916}, \cite{BC1914}.

\begin{proposition}	\label{DLm}
Let m be a positive integer, $1/2\leq \sigma<1$. 
Let $a$ be any complex number, and $\e$ be any positive number.
If we take a sufficiently large number $N_{0} = N_{0}(m, \sigma, a, \varepsilon)$, then, 
for any integer $N \geq N_{0}$, there exists 
some Jordan measurable set $\Theta_0 = \Theta_0(m, \sigma, a, \varepsilon, N) \subset [0, 1)^{\pi(N)}$ with $\meas( \Theta_0 ) >0$ such that 
\begin{align*}
\left| \sum_{p \leq N} \frac{\Li_{m+1}(p^{ - \sigma}\exp(- 2 \pi i \theta_p))}{(\log{p})^{m}} - a \right| < \varepsilon.
\end{align*}
for any $\underline{\theta}= \left(\theta_{p_n}\right)_{n =1}^{\pi(N)} \in \Theta_0$.
\end{proposition}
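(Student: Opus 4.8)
The plan is to adapt the classical rearrangement argument of \cite{B1916}, \cite{BC1914}, the new feature being that the extra weight $(\log p)^{-m}$ with $m\geq 1$ supplies exactly the convergence that lets the method go through down to $\s=1/2$. First I would separate off the linear term of the polylogarithm: writing $\Li_{m+1}(z)=z+\sum_{k\geq 2}k^{-m-1}z^{k}$ and using $|p^{-\s}e^{-2\pi i\theta}|=p^{-\s}\leq p^{-1/2}<1$, one gets, uniformly in $\theta$,
\begin{align*}
\Li_{m+1}(p^{-\s}e^{-2\pi i\theta})=p^{-\s}e^{-2\pi i\theta}+E_{p}(\theta),\qquad |E_{p}(\theta)|\leq\sum_{k\geq 2}p^{-k\s}\ll p^{-2\s}\ll p^{-1}.
\end{align*}
Hence $\sum_{p}(\log p)^{-m}|E_{p}(\theta)|\ll\sum_{p}p^{-1}(\log p)^{-m}<\infty$ because $m\geq 1$; this convergence is the one genuinely essential use of the weight. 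Given $\e$, I would then fix $Y=Y(m,\s,\e)$ so large that $\sum_{p>Y}(\log p)^{-m}|E_{p}(\theta)|<\e/2$ for every choice of phases, and freeze $\theta_{p}=0$ for $p\leq Y$, so that the small primes contribute a fixed constant $C_{Y}\in\CC$.

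The second ingredient is divergence: for $Y<p\leq N$ the remaining sum equals $\sum_{Y<p\leq N}\ell_{p}e^{-2\pi i\theta_{p}}$ up to an error of modulus $<\e/2$, where $\ell_{p}:=p^{-\s}(\log p)^{-m}$, and by Chebyshev's bound $\pi(x)\gg x/\log x$ together with $\s<1$,
\begin{align*}
\sum_{Y<p\leq N}\ell_{p}\geq\frac{1}{(\log N)^{m}}\sum_{Y<p\leq N}p^{-\s}\gg\frac{N^{1-\s}}{(\log N)^{m+1}}\longrightarrow\infty\qquad(N\to\infty).
\end{align*}
It is here that $\s<1$ is used decisively (for $\s=1$ the series converges and the argument breaks down, consistently with Problem \ref{DP}). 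Now I would invoke the elementary fact that if $\ell_{p}>0$ and $\max_{p}\ell_{p}\leq\tfrac12\sum_{p}\ell_{p}$, then $\{\sum_{p}\ell_{p}e^{-2\pi i\theta_{p}}:\theta_{p}\in[0,1)\}$ is the whole closed disc of radius $\sum_{p}\ell_{p}$ (a Minkowski sum of circles, proved by adding the circles one at a time and checking the inner radius stays $0$). Since each $\ell_{p}$ with $p>Y$ is bounded by the fixed constant $\ell_{p_{0}}$ ($p_{0}$ the least prime exceeding $Y$) while $\sum_{Y<p\leq N}\ell_{p}\to\infty$, for all large $N$ this disc contains $a-C_{Y}$; choosing phases $\theta_{p}^{\ast}$ with $\sum_{Y<p\leq N}\ell_{p}e^{-2\pi i\theta_{p}^{\ast}}=a-C_{Y}$ exactly, the full sum at $\underline{\theta}^{\ast}$ lies within $\e/2<\e$ of $a$.

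For the measure statement I would note that $\underline{\theta}\mapsto\sum_{p\leq N}(\log p)^{-m}\Li_{m+1}(p^{-\s}e^{-2\pi i\theta_{p}})$ is continuous (indeed real-analytic) on $[0,1)^{\pi(N)}$, so the set where it lies within $\e$ of $a$ is open and, by the previous paragraph, nonempty; it therefore contains a nondegenerate box, which I take as $\Theta_{0}$ — Jordan measurable with positive measure. The only point demanding real care is the uniform-in-$\theta$ control of the nonlinear part of $\Li_{m+1}$ and the check that freezing the small primes really leaves an error below $\e/2$; granting that, the divergence estimate and the geometric rearrangement are routine, and I anticipate no substantial obstacle beyond bookkeeping.
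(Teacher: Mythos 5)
Your argument is correct and is essentially the paper's own proof: both decompose $\Li_{m+1}$ into its linear term plus an absolutely summable remainder, apply the elementary disc-surjectivity lemma for $\sum r_p e^{-2\pi i\theta_p}$ (with the condition $\max_p r_p \le \tfrac12\sum_p r_p$, secured by divergence of $\sum_p p^{-\s}(\log p)^{-m}$ for $\s<1$) to the primes in a range $(Y,N]$, freeze the phases of the small primes, and finish by continuity to get a nonempty open (hence positive-measure Jordan) set. The only cosmetic differences are that you fix the small-prime phases at $0$ and target $a-C_Y$ (a finite constant), whereas the paper uses alternating phases and the convergent full-series constant $\gamma_{m,\s}$, and that you justify the disc lemma by Minkowski sums of circles rather than by the existence of convex polygons.
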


Roughly speaking, 
Proposition \ref{ALD} means that $\te_{m}(\s+it)$ ``almost" equals the finite sum of polylogarithmic functions when the number of the terms of the sum is sufficiently large,
and Proposition \ref{DLm} that any complex number can be approximated by the finite sum of polylogarithmic functions when the number of the terms of the sum is sufficiently large.

\section{\textbf{Proof of Proposition \ref{ALD}}}

In this section, we prove Proposition \ref{ALD}.
In order to prove it, we prepare two lemmas.

\begin{lemma}	\label{ML}
Let $m$ be a positive integer, and $\s \geq 1/2$.
Let $T$ be large.
Then, for $3 \leq X \leq T^{\frac{1}{135}}$, we have
\begin{align*}
\frac{1}{T}\int_{14}^{T}\bigg| \te_{m}(\s + it)
- \sum_{2 \leq n \leq X}\frac{\Lam(n)}{n^{\s+it}(\log{n})^{m+1}} \bigg|^{2} dt
\ll \frac{X^{1 - 2\s}}{(\log{X})^{2m}}.
\end{align*}
\end{lemma}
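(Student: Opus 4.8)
The goal is the mean-square bound
\[
\frac{1}{T}\int_{14}^{T}\Bigl| \te_{m}(\s+it)-\sum_{2\le n\le X}\frac{\Lam(n)}{n^{\s+it}(\log n)^{m+1}}\Bigr|^{2}dt \ll \frac{X^{1-2\s}}{(\log X)^{2m}}
\]
for $3\le X\le T^{1/135}$. The natural strategy is to combine the integral representation \eqref{etatF} of $\te_{m}(\s+it)$ with a truncated Dirichlet-series expansion of $\log\zeta(\a+it)$ that is valid on average. First I would recall the explicit formula / approximate expression for $\log\zeta(s)$ coming from the zero-detecting machinery of Selberg — in the form used in \cite{SC1946} and \cite{II2019}: for $\sigma$-values to the right of the critical line one has $\log\zeta(\a+it)=\sum_{n\le X}\frac{\Lam(n)}{n^{\a+it}\log n}+(\text{error involving zeros near }\a+it\text{ and a tail})$. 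Substituting this into \eqref{etatF}, the main term becomes
\[
\frac{1}{(m-1)!}\int_{\s}^{\infty}(\a-\s)^{m-1}\sum_{n\le X}\frac{\Lam(n)}{n^{\a+it}\log n}\,d\a
=\sum_{n\le X}\frac{\Lam(n)}{n^{\s+it}\log n}\cdot\frac{1}{(m-1)!}\int_{0}^{\infty}\frac{u^{m-1}}{n^{u}}\,du
=\sum_{n\le X}\frac{\Lam(n)}{n^{\s+it}(\log n)^{m+1}},
\]
which is exactly the claimed Dirichlet polynomial; the factor $(\log n)^{-m-1}$ is produced precisely by the $m$-fold horizontal integration (the $\Gamma(m)=(m-1)!$ cancels). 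So the content is entirely in estimating the error terms after integrating against $(\a-\s)^{m-1}$.

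**The error terms and the main estimate.** There are two sources of error: (i) the tail of the Dirichlet series (terms with $n>X$, or the smoothing error in the approximate formula), and (ii) the sum over zeros $\rho$ of $\zeta$ lying near the line $\Re s=\a$. For (i), after multiplying by $(\a-\s)^{m-1}$ and integrating $\a$ from $\s$ to $\infty$, the tail contributes something like $\sum_{n>X}\frac{\Lam(n)}{n^{\s}(\log n)^{m+1}}$-type quantities whose mean square over $t\in[14,T]$ is handled by the classical mean-value theorem for Dirichlet polynomials (orthogonality of $n^{-it}$), giving a bound of the right shape $X^{1-2\s}(\log X)^{-2m}$ up to constants; here the constraint $X\le T^{1/135}$ is what guarantees the off-diagonal terms are negligible and the diagonal dominates. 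For (ii), the zero-sum contribution is where a zero-density input is essential: one integrates the zero terms over $\a\ge\s$, uses that each zero $\rho=\b+i\g$ contributes an amount decaying in $|\g-t|$ and in $\b-\s$, and then bounds the number of relevant zeros via a density estimate $N(\sigma,T)\ll T^{1-c(\sigma-1/2)}(\log T)^{A}$. The exponent $1/135$ in $X\le T^{1/135}$ is almost certainly calibrated so that, after Cauchy–Schwarz and the density bound, the zero contribution to the mean square is also $\ll X^{1-2\s}(\log X)^{-2m}$ (or smaller), uniformly down to $\sigma=1/2$.

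**Order of operations.** Concretely I would: (1) write $\te_{m}(\s+it)$ via \eqref{etatF}; (2) insert Selberg's approximate formula for $\log\zeta(\a+it)$ with truncation parameter $X$, splitting into the Dirichlet polynomial, a zero-sum $S_{Z}(\a,t)$, and a tail $S_{T}(\a,t)$; (3) interchange the $\a$-integral with the (finite) Dirichlet sum to recognize the exact main term, so that $\te_{m}(\s+it)-\sum_{2\le n\le X}\frac{\Lam(n)}{n^{\s+it}(\log n)^{m+1}}=\frac{1}{(m-1)!}\int_{\s}^{\infty}(\a-\s)^{m-1}\bigl(S_{Z}(\a,t)+S_{T}(\a,t)\bigr)d\a$ plus a boundary/contour-shift term that is harmless; (4) apply Cauchy–Schwarz in $\a$ (using that $(\a-\s)^{m-1}e^{-(\a-\s)}$, say, is integrable, or a suitable weight) to reduce $\frac{1}{T}\int|{\cdots}|^2dt$ to estimating $\frac{1}{T}\int_{14}^{T}|S_{Z}(\a,t)|^2dt$ and $\frac{1}{T}\int_{14}^{T}|S_{T}(\a,t)|^2dt$ individually, with the right $\a$-decay so the outer integral converges; (5) bound the tail mean square by the Dirichlet-polynomial mean-value theorem, and the zero-sum mean square by the density estimate.

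**Main obstacle.** The genuinely delicate step is (5) for the zero-sum, carried out uniformly all the way down to $\sigma=1/2$: one must show $\frac{1}{T}\int_{14}^{T}|S_{Z}(\a,t)|^{2}dt$ is small enough after integrating over $\a\ge\sigma\ge 1/2$, which forces the use of a strong zero-density estimate with exponent savings $T^{1-c(\sigma-1/2)}$ (times a power of $\log T$ with exponent $A<m+1$, as the paper emphasizes), rather than the trivial $N(\sigma,T)\ll T\log T$. Bookkeeping the interaction between the $\a$-integration weight $(\a-\s)^{m-1}$, the decay of the zero kernel in $\b-\a$ and in $|\g-t|$, and the density bound — all while keeping constants uniform in $\s\in[1/2,1)$ and tracking the precise value $X\le T^{1/135}$ — is the technical heart of the argument; everything else is routine manipulation of the integral representation and standard mean-value estimates.
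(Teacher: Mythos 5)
The paper does not prove this lemma from first principles at all: its entire proof is the observation that Theorem 5 of \cite{II2019} already gives
\[
\frac{1}{T}\int_{14}^{T}\Big| \eta_{m}(\s+it)-i^{m}\sum_{2\le n\le X}\frac{\Lam(n)}{n^{\s+it}(\log n)^{m+1}}-Y_{m}(\s+it)\Big|^{2}dt\ll \frac{X^{1-2\s}}{(\log X)^{2m}},
\]
where $Y_m$ is the zero-sum \eqref{def_Y}, and that Lemma \ref{BFQm} gives the exact identity $\eta_{m}(\s+it)-Y_{m}(\s+it)=i^{m}\te_{m}(\s+it)$, so the claimed bound follows by substitution. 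Your proposal instead tries to rebuild the content of that cited theorem. Your identification of the main term is correct and is the right mechanism: integrating $(\a-\s)^{m-1}n^{-\a}$ over $\a\ge\s$ produces $\Gamma(m)(\log n)^{-m}$, which after dividing by $(m-1)!$ and combining with $\Lam(n)/(n^{\a+it}\log n)$ yields exactly $\Lam(n)/(n^{\s+it}(\log n)^{m+1})$.

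The gap is that everything after that is a description of what would need to be done rather than a proof. The decisive step --- showing that the mean square over $t\in[14,T]$ of the zero-sum error, after integration against $(\a-\s)^{m-1}$ down to $\a=\s=1/2$, is $\ll X^{1-2\s}(\log X)^{-2m}$ --- is precisely the hard content of Theorem 5 in \cite{II2019}, and your write-up acknowledges it only as ``the technical heart'' with the exponent $1/135$ being ``almost certainly calibrated'' appropriately. Note also a structural subtlety your sketch glosses over: Selberg-type approximate formulas for $\log\zeta(\a+it)$ are not valid pointwise at or near zeros, and near the critical line the zero contribution is not small pointwise; it is only small in mean square, and extracting that requires the explicit decomposition into $Y_m$ (the discrete zero-count term, which is \emph{not} small and must be subtracted exactly, as Lemma \ref{BFQm} does) plus a genuinely small remainder. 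Without either carrying out that analysis or invoking \cite{II2019} as the paper does, the argument is a plausible plan, not a proof.
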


\begin{proof}
By Theorem 5 in \cite{II2019}, we have
\begin{align*}
\frac{1}{T}\int_{14}^{T}\bigg| \eta_{m}(\s + it)
- i^{m}\sum_{2 \leq n \leq X}\frac{\Lam(n)}{n^{\s+it}(\log{n})^{m+1}} - Y_{m}(\s+it) \bigg|^{2} dt
\ll \frac{X^{1 - 2\s}}{(\log{X})^{2m}},
\end{align*}
where
\begin{align}	\label{def_Y}
Y_{m}(\s+it) = 2\pi \sum_{k = 0}^{m-1}\frac{i^{m-1-k}}{(m-k)! k!}\us{\b > \s}{\sum_{0 < \gamma < t}}(\b - \s)^{m-k}(t - \gamma)^{k}.
\end{align}
Further, by Lemma \ref{BFQm}, we see that
\begin{align*}
\eta_{m}(\s+it) - Y_{m}(\s+it) = i^{m}\te_{m}(\s+it).
\end{align*}
Hence we obtain this lemma.
\end{proof}

\begin{lemma}	\label{PLvM}
Let $m$ be an integer, $\s \geq 1/2$.
Let $T$ be large. 
Then for $3 \leq X \leq T^{1/4}$, we have
\begin{align*}
\frac{1}{T}\int_{0}^{T}\bigg| \sum_{p \leq X}\frac{\Li_{m+1}(p^{-\s - it})}{(\log{p})^{m}}
- \sum_{2 \leq n \leq X}\frac{\Lam(n)}{n^{\s+it}(\log{n})^{m+1}} \bigg|^2 dt
\ll \frac{X^{1 - 2\s}}{(\log{X})^{2m+1}},
\end{align*}
where the function $\Lam(n)$ is the von Mangoldt function.
\end{lemma}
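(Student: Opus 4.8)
The plan is to expand the polylogarithm into its Dirichlet series, recognise the difference in the integrand as a short Dirichlet polynomial supported on ``excess'' prime powers, and then apply the mean value theorem for Dirichlet polynomials. First I would note that, since $\Li_{m+1}(z)=\sum_{k\geq1}z^{k}/k^{m+1}$ for $|z|<1$ and $\Lam(p^{k})=\log p$ while $\log p^{k}=k\log p$, a direct computation gives
\begin{align*}
\sum_{p\leq X}\frac{\Li_{m+1}(p^{-\s-it})}{(\log p)^{m}}
=\sum_{\substack{n=p^{k}\\ p\leq X}}\frac{\Lam(n)}{n^{\s+it}(\log n)^{m+1}},
\end{align*}
the right-hand sum running over \emph{all} prime powers whose underlying prime does not exceed $X$. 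Subtracting the Dirichlet polynomial of the lemma, which is precisely the same sum restricted to $n\leq X$, leaves
\begin{align*}
D(t):=\sum_{p\leq X}\;\sum_{\substack{k\geq2\\ p^{k}>X}}\frac{1}{k^{m+1}\,p^{k(\s+it)}\,(\log p)^{m}},
\end{align*}
where the constraint forces $k\geq2$ because $p\leq X$ already excludes $k=1$.

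Next I would split $D=D_{1}+D_{2}$ according to whether $p^{k}\leq T$ or $p^{k}>T$. The tail $D_{2}$ I would estimate trivially by absolute values: for each $p$ the smallest admissible exponent $k$ has $p^{k}>T\geq p^{2}$ (since $p\leq X\leq T^{1/4}$), so summing the geometric series in $k$ and using $\s\geq1/2$ gives $|D_{2}(t)|\ll T^{-\s}\pi(X)\ll T^{1/4-\s}/\log X$, whence the mean square of $D_{2}$ is $\ll T^{-1/2}(\log X)^{-2}$; this is far smaller than $X^{1-2\s}(\log X)^{-2m-1}$ because $X\leq T^{1/4}$, $\s\leq1$ force $X^{1-2\s}\geq X^{-1}\geq T^{-1/4}$ while $(\log X)^{2m+1}=o(T^{1/4})$. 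For the main part $D_{1}$, a genuine finite Dirichlet polynomial with frequencies $n=p^{k}\leq T$, the mean value theorem for Dirichlet polynomials yields
\begin{align*}
\frac{1}{T}\int_{0}^{T}|D_{1}(t)|^{2}\,dt
\ll\sum_{p\leq X}\;\sum_{\substack{k\geq2\\ p^{k}>X}}\frac{1}{k^{2m+2}\,p^{2k\s}\,(\log p)^{2m}},
\end{align*}
the error term $O(n/T)$ of that theorem being harmless since every frequency satisfies $n\leq T$.

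It then remains to bound this diagonal sum by $X^{1-2\s}(\log X)^{-2m-1}$, and here I would split the $p$-sum at $\sqrt{X}$. For $\sqrt{X}<p\leq X$ the inner sum starts at $k=2$; summing the geometric series and using $p^{2k\s}\geq p^{4\s}>X^{2\s}$, $\log p>\tfrac12\log X$, and $\pi(X)\ll X/\log X$ gives a contribution $\ll X^{1-2\s}(\log X)^{-2m-1}$. For $p\leq\sqrt{X}$ the smallest admissible exponent is $k_{0}(p)=\min\{k:p^{k}>X\}$, which satisfies $k_{0}(p)\geq\log X/\log p$; inserting this into $k^{-(2m+2)}$ produces an extra factor $(\log p/\log X)^{2m+2}$, and combining with $p^{2k_{0}\s}>X^{2\s}$ and $\sum_{p\leq\sqrt{X}}(\log p)^{2}\ll\sqrt{X}\log X$ gives a contribution $\ll X^{1/2-2\s}(\log X)^{-2m-1}$, which is even smaller since $X\geq3$. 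Adding the two ranges, together with the $D_{2}$ estimate and the inequality $(a+b)^{2}\leq2a^{2}+2b^{2}$, completes the proof.

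I do not expect a genuine obstruction here: the argument is elementary once the difference is rewritten as $D(t)$. The one point requiring care is obtaining the precise power $(\log X)^{-2m-1}$ in the diagonal bound, for which one must exploit the growth $k_{0}(p)\asymp\log X/\log p$ in the range $p\leq\sqrt{X}$ rather than the crude bound $k_{0}(p)\geq3$, and simultaneously check that the Montgomery--Vaughan error and the tail $D_{2}$ remain comfortably below this threshold --- which is exactly what the hypothesis $X\leq T^{1/4}$ secures.
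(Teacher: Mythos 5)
Your proposal is correct and follows essentially the same route as the paper: both rewrite the difference as the Dirichlet series over the ``excess'' prime powers $p^{k}>X$ with $p\leq X$, discard a small tail (the paper cuts at $p^{k}\leq X^{3}$ and bounds the off-diagonal terms trivially by $X^{3}(\sum|a_n|)^{2}$ after expanding the square, whereas you cut at $p^{k}\leq T$ and quote the Montgomery--Vaughan mean value theorem), and then extract the factor $(\log X)^{-(2m+1)}$ from the diagonal sum via $k\geq\log X/\log p$. One small caveat: your tail estimate for $D_{2}$ invokes $\s\leq1$, which is not in the hypothesis ($\s\geq1/2$ only); this is harmless in the range actually used in the paper and is repaired in general by keeping the decay $k^{-(m+1)}\ll(\log p/\log T)^{m+1}$ in the tail instead of bounding it by $O(1)$.
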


\begin{proof}
By definitions of the polylogarithmic function and the von Mangoldt function, we find that
\begin{align*}
&\sum_{p \leq X}\frac{\Li_{m+1}(p^{-\s - it})}{(\log{p})^{m}}
- \sum_{2 \leq n \leq X}\frac{\Lam(n)}{n^{\s+it}(\log{n})^{m+1}}
= \sum_{p \leq X}\sum_{k > \frac{\log{X}}{\log{p}}}\frac{p^{-k(\s+it)}}{k^{m+1}(\log{p})^{m}}\\
&= \sum_{p \leq X}\sum_{\frac{\log{X}}{\log{p}} < k \leq 3\frac{\log{X}}{\log{p}}}\frac{p^{-k(\s+it)}}{k^{m+1}(\log{p})^{m}}
+ O\l( \frac{X^{1 - 3\s}}{(\log{X})^{m}} \r).
\end{align*}
Here, we can write
\begin{align*}
&\bigg|  \sum_{p \leq X}\sum_{\frac{\log{X}}{\log{p}} < k \leq 3\frac{\log{X}}{\log{p}}}
\frac{p^{-k(\s+it)}}{k^{m+1}(\log{p})^{m}} \bigg|^2\\
&= \sum_{p \leq X}\sum_{\frac{\log{X}}{\log{p}} < k \leq 3\frac{\log{X}}{\log{p}}}\frac{p^{-2k\s}}{k^{2(m+1)}(\log{p})^{2m}}+\\
&\qquadt + \underset{(p_1, k_1) \not= (p_2, k_2)}
{\sum_{p_1 \leq X} \sum_{p_2 \leq X} \sum_{\frac{\log{X}}{\log{p_1}} < k_1 \leq 3\frac{\log{X}}{\log{p_1}}}
\sum_{\frac{\log{X}}{\log{p_2}} < k_2 \leq 3\frac{\log{X}}{\log{p_2}}}}
\frac{(p_1^{k_1}p_2^{k_2})^{-\s}(p_1^{k_1} / p_2^{k_2})^{-it}}{(k_1 k_2)^{m+1}(\log{p_1} \log{p_2})^{m}}.
\end{align*}
Therefore, it holds that
\begin{align*}
&\int_{0}^{T}\bigg|  \sum_{p \leq X}\sum_{\frac{\log{X}}{\log{p}} < k \leq 3\frac{\log{X}}{\log{p}}}
\frac{p^{-k(\s+it)}}{k^{m+1}(\log{p})^{m}} \bigg|^2 dt\\
&= T\sum_{p \leq X}\sum_{\frac{\log{X}}{\log{p}} < k \leq 3\frac{\log{X}}{\log{p}}}\frac{p^{-2k\s}}{k^{2(m+1)}(\log{p})^{2m}} +\\
&\qquadf \qquad +O\l( X^3 \l(\sum_{p \leq X}
\sum_{\frac{\log{X}}{\log{p}} < k \leq 3\frac{\log{X}}{\log{p}}}\frac{1}{p^{k\s} k^{m+1}(\log{p})^{m}} \r)^2 \r)\\
&\ll T\frac{X^{1-2\s}}{(\log{X})^{2m+1}}
+ \frac{X^{5 - 2\s}}{(\log{X})^{2(m+1)}}
\ll T\frac{X^{1-2\s}}{(\log{X})^{2m+1}}.
\end{align*}
Hence we have
\begin{align*}
&\int_{0}^{T}\bigg|\sum_{p \leq X}\frac{\Li_{m+1}(p^{-\s - it})}{(\log{p})^{m}}
- \sum_{2 \leq n \leq X}\frac{\Lam(n)}{n^{\s+it}(\log{n})^{m+1}}\bigg|^2 dt\\
&\ll \int_{0}^{T}\bigg|  \sum_{p \leq X}\sum_{\frac{\log{X}}{\log{p}} < k \leq 3\frac{\log{X}}{\log{p}}}
\frac{p^{-k(\s+it)}}{k^{m+1}(\log{p})^{m}} \bigg|^2 dt + T \frac{X^{2 - 6\s}}{(\log{X})^{2m}}
\ll T\frac{X^{1 - 2\s}}{(\log{X})^{2m+1}},
\end{align*}
which completes the proof of this lemma.
\end{proof}

\begin{proof}[Proof of Proposition \ref{ALD}]
By Lemma \ref{ML} and Lemma \ref{PLvM}, for $X \leq T^{1/135}$, we find that
\begin{align*}
&\frac{1}{T}\int_{14}^{T}\bigg| \tilde{\eta}_{m}(\s + it) 
- \sum_{p \leq X}\frac{\Li_{m+1}(p^{-\s - it})}{(\log{p})^{m}}\bigg|^2 dt\\
&\ll \frac{1}{T}\int_{14}^{T}\bigg| \tilde{\eta}_{m}(\s + it) 
- \sum_{2 \leq n \leq X}\frac{\Lam(n)}{n^{\s + it}(\log{n})^{m+1}} \bigg|^2 dt\\
&\qquadf + \frac{1}{T}\int_{14}^{T}\bigg| \sum_{p \leq X}\frac{\Li_{m+1}(p^{-\s - it})}{(\log{p})^{m}} 
- \sum_{2 \leq n \leq X}\frac{\Lam(n)}{n^{\s + it}(\log{n})^{m+1}} \bigg|^2 dt\\
&\ll \frac{X^{1 - 2\s}}{(\log{X})^{2m}}.
\end{align*}
By using this estimate, for any fixed $\e > 0$, we have
\begin{multline*}
\frac{1}{T}\meas\l\{ t \in [0, T] \; \Bigg{|} \; 
\bigg| \tilde{\eta}_{m}(\s + it) - \sum_{p \leq X}\frac{\Li_{m+1}(p^{-\s - it})}{(\log{p})^{m}} \bigg| \geq \e \r\}\\
\ll \frac{X^{1 - 2\s}}{\e^{2}(\log{X})^{2m}} + \frac{1}{T}.
\end{multline*}
Hence, for any $T \geq X^{135}$, it holds that
\begin{align*}
\frac{1}{T}\meas\l\{ t \in [0, T] \; \Bigg{|} \; 
\bigg| \tilde{\eta}_{m}(\s + it) - \sum_{p \leq X}\frac{\Li_{m+1}(p^{-\s - it})}{(\log{p})^{m}} \bigg| \geq \e \r\}
\rightarrow 0
\end{align*}
as $X \rightarrow +\infty$.
Thus, we obtain Proposition \ref{ALD}.
\end{proof}

\section{\textbf{Proof of Proposition \ref{DLm}}}
In this section, we prove Proposition \ref{DLm} by the method of \cite[VIII.3]{KV1992}, \cite{V1988}.
First of all, we will show the following elementary geometric lemma.
The following lemma is a special case of Lemma 3.2 in Takanobu's textbook \cite{Ta2013}.

\begin{lemma}\label{ELG}
Let $N$ be a positive integer larger than two.
Suppose that the positive numbers $r_1, r_2, \ldots, r_N$ satisfy the condition
\begin{align}	\label{AMP}
r_{n_0} \leq \sum_{\substack{ n =1 \\ n \neq n_0 } }^N r_n, 
\end{align}
where $r_{n _0} = \max\{ r_n \mid n = 1, 2, \ldots, N \}$. 
Then we have
\begin{align}	\label{SPC}
\left\{ \sum_{n =1}^{N}  r_n \exp ( - 2 \pi i \theta_n ) \in \CC ~\bigg{|}~ \theta_n \in [0,1) \right\}
=\left\{ z \in \mathbb{C} ~\bigg{|}~ | z | \leq \sum_{n =1}^N r_n \right\}.
\end{align}
\end{lemma}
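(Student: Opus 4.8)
The plan is to prove the non-trivial inclusion ``$\supseteq$'' in \eqref{SPC}; the reverse inclusion is immediate from the triangle inequality, since $\bigl|\sum_{n=1}^{N} r_n\exp(-2\pi i\theta_n)\bigr| \le \sum_{n=1}^{N} r_n$ for every $\underline{\theta}$. Write $S$ for the set on the left of \eqref{SPC}. First I would record two soft facts about $S$: it is connected, being the continuous image of the convex (hence connected) set $[0,1)^{N}$ under $\underline{\theta}\mapsto\sum_{n} r_n\exp(-2\pi i\theta_n)$; and it is invariant under multiplication by any complex number of modulus $1$, because shifting every $\theta_n$ by a common $\phi$ multiplies the sum by $\exp(-2\pi i\phi)$. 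Hence $S=\{z : |z|\in R\}$ where $R:=\{|z| : z\in S\}$, and $R$ --- being the continuous image of the connected set $S$ --- is a subinterval of $[0,\sum_n r_n]$ containing $\sum_n r_n$ (take $\underline{\theta}=\underline{0}$). So it is enough to show $0\in S$: then $R=[0,\sum_n r_n]$ and $S$ is exactly the closed disc in \eqref{SPC}.

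The substance of the lemma is therefore the assertion that, under \eqref{AMP}, one can choose $\theta_1,\dots,\theta_N\in[0,1)$ with $\sum_{n=1}^{N} r_n\exp(-2\pi i\theta_n)=0$; equivalently, that a closed (possibly degenerate) polygon with prescribed side-lengths $r_1,\dots,r_N$ exists exactly when the longest side is at most the sum of the others. I would prove this by induction on $N$. For the base case $N=3$, condition \eqref{AMP} together with $r_{n_0}=\max$ yields all three triangle inequalities, so a (possibly degenerate) triangle with these side-lengths exists and its three edge-vectors, each of the form $r_n\exp(-2\pi i\theta_n)$, sum to $0$. For the inductive step, relabel so that $r_1\ge r_2\ge\dots\ge r_N$ and $N\ge 4$, and replace the two smallest lengths by the single length $s:=r_{N-1}+r_N$. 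The $N-1$ numbers $r_1,\dots,r_{N-2},s$ again satisfy \eqref{AMP}: if $r_1$ stays maximal this reads $r_1\le r_2+\dots+r_{N-2}+s=r_2+\dots+r_N$, which is the hypothesis; and if $s$ becomes the maximum, then $s=r_{N-1}+r_N\le r_1+r_2\le r_1+\dots+r_{N-2}$ (using $r_{N-1}\le r_1$, $r_N\le r_2$ and $N\ge 4$). By the induction hypothesis there is a closed polygon with sides $r_1,\dots,r_{N-2},s$; since $s=r_{N-1}+r_N$, splitting that side into two collinear segments of lengths $r_{N-1}$ and $r_N$ gives a closed polygon with sides $r_1,\dots,r_N$, and reading off the directions of its edge-vectors produces the required $\theta_n$.

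The only real obstacle is this polygon-closing step, and even it is classical; the remainder is the soft observation that a connected, rotationally symmetric planar set containing $0$ and a point of maximal modulus must be the whole closed disc. I would only need to take a little care that the degenerate configurations --- equality in \eqref{AMP}, and collinear edge-vectors --- are subsumed by allowing degenerate polygons, which they are.
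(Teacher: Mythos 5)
Your proposal is correct, but it reaches the conclusion by a genuinely different route from the paper. The paper fixes an arbitrary target $z=r\exp(-2\pi i\phi)$ with $0<r\le\sum_n r_n$ and realizes it directly: it appeals to a geometric lemma (stated without proof) asserting that an $(N+1)$-sided convex polygon with side lengths $r_1,\dots,r_N,r$ exists under \eqref{AMP}, and then reads the angles $\theta_n$ off the edge directions of that polygon, treating $z=0$ by a separate remark. You instead construct only the single value $0$ --- i.e., a closed (possibly degenerate) $N$-gon with the prescribed side lengths --- and recover the entire disc by the soft observations that the left-hand set of \eqref{SPC} is connected, rotation-invariant, contains a point of maximal modulus $\sum_n r_n$, and is contained in the disc by the triangle inequality. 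Your route buys two things: the topological step replaces the need to build a polygon for every radius $r$ by the single closed-polygon case, and your induction (merging the two smallest sides into one of length $r_{N-1}+r_N$ and checking that \eqref{AMP} persists, with the $N=3$ triangle as base case) actually supplies a complete proof of the polygon-existence fact that the paper only cites and declines to prove. What you lose relative to the paper is explicitness: the paper's argument exhibits, for each target $z$, a concrete tuple $(\theta_1,\dots,\theta_N)$, whereas your connectedness argument is non-constructive away from $z=0$. Both arguments are sound; your case analysis in the inductive step (whether $r_1$ or the merged length $s$ is maximal, using $N\ge4$ so that $s\le r_1+r_2\le r_1+\dots+r_{N-2}$) is complete, and the degenerate configurations are indeed absorbed by allowing degenerate polygons.
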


We give a simpler proof than that in \cite{Ta2013} by using the following elementary geometric theorem 
on the existence of polygons.
Our proof is essentially the same as Takanobu's proof, 
but his proof seems complicated because he did not postulate the geometric theorem.

\begin{lemma}	\label{ECP}
Let $N$ be a positive integer larger than two. 
Suppose that the positive numbers $r_{1}, r_2, \dots, r_N$ satisfy condition \eqref{AMP}. 
Then, we can make an $N$-sided convex polygon with the lengths $r_1, r_2, \dots, r_N$.
\end{lemma}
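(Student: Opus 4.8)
The plan is to reduce the construction of the polygon to the purely combinatorial statement that, under \eqref{AMP}, there exist angles $\theta_1,\dots,\theta_N\in\RR$ with $\sum_{n=1}^{N}r_ne^{i\theta_n}=0$; geometrically this just says that the vectors $r_ne^{i\theta_n}$ close up into some $N$-gon, a priori neither convex nor non-degenerate. Granting such angles, I would relabel the vectors as $w_1,\dots,w_N$ so that $\arg w_1\le\cdots\le\arg w_N$ and let $P$ be the closed polygonal line with vertices $0,\ w_1,\ w_1+w_2,\ \dots,\ w_1+\cdots+w_N=0$. The exterior angle of $P$ at each vertex is the jump of the argument between consecutive edges, hence nonnegative, and these exterior angles sum to $2\pi$; moreover no single exterior angle exceeds $\pi$, for otherwise all the $w_n$ would lie in a common open half-plane and could not sum to $0$. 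It is elementary that a closed polygonal line all of whose exterior angles lie in $[0,\pi]$ and which sum to $2\pi$ is a simple convex polygon, and by construction its side lengths are $r_1,\dots,r_N$ up to order. So the whole problem is to produce the angles.

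To produce them I would induct on $N$. After relabelling assume $r_1\ge r_2\ge\cdots\ge r_N$, so \eqref{AMP} reads $r_1\le r_2+\cdots+r_N$. It suffices to find $\theta_2,\dots,\theta_N$ with $\bigl|\sum_{n=2}^{N}r_ne^{i\theta_n}\bigr|=r_1$, for then setting $e^{i\theta_1}=-r_1^{-1}\sum_{n=2}^{N}r_ne^{i\theta_n}$, which is a unit vector, yields $\sum_{n=1}^{N}r_ne^{i\theta_n}=0$. Now $\Set{\,|w|\,}{w=\sum_{n=2}^{N}r_ne^{i\theta_n},\ \theta_n\in\RR}$ is a closed interval $[m,M]$, being a continuous image of the connected compact torus $\TT^{N-1}$, and $M=r_2+\cdots+r_N$ by the triangle inequality. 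For $m$ there are two cases. If $r_2\le r_3+\cdots+r_N$, then $(r_2,\dots,r_N)$ again satisfies \eqref{AMP} (its maximum is $r_2$), so by the induction hypothesis $0$ lies in the corresponding sum set, i.e.\ $m=0$. If $r_2>r_3+\cdots+r_N$, then aligning the vectors $r_3,\dots,r_N$ opposite to $r_2$ shows $m=r_2-r_3-\cdots-r_N$. In either case $m\le r_1$ — using $r_1\ge r_2$ in the second case — while $r_1\le M$ by \eqref{AMP}, so $r_1\in[m,M]$; this is exactly what we needed. The base case $N=2$ is immediate, as \eqref{AMP} then forces $r_1=r_2$.

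Combining the two parts gives the lemma. I expect the real work to be the induction of the second paragraph: one must identify exactly the moduli attained by $\sum_{n\ge2}r_ne^{i\theta_n}$ and organise the case split so that deleting the largest length $r_1$ leaves either a tuple still obeying \eqref{AMP} (so the induction hypothesis applies) or a tuple whose smallest attainable modulus can be written down by hand, the inequality $r_1\ge r_2$ being what makes the two cases dovetail. The convexity of $P$, by contrast, is a soft consequence of having ordered the edges by argument. One bookkeeping caveat: if equality holds in \eqref{AMP} then every closing $N$-gon collapses onto a line, so if one wants a non-degenerate polygon the equality case should be recorded separately (the collinear configuration being a degenerate convex polygon), whereas in the strict case the construction above — or, alternatively, inscribing the polygon in a circle whose radius is chosen by the intermediate value theorem — produces a genuine convex polygon.
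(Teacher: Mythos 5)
The paper does not actually prove Lemma \ref{ECP}: the authors state that it ``seems well known and can be proved by an elementary argument'' and omit the proof entirely. So there is nothing of theirs to compare against; what you have written is a genuine proof of the omitted lemma, and both halves of it are the standard ones and essentially correct. The closing-up step (producing $\theta_n$ with $\sum_n r_n e^{i\theta_n}=0$) by induction on $N$ works: the image of the compact connected torus $\TT^{N-1}$ under $(\theta_2,\dots,\theta_N)\mapsto\bigl|\sum_{n\ge2}r_ne^{i\theta_n}\bigr|$ is indeed a closed interval $[m,M]$ with $M=r_2+\cdots+r_N$, your two-case evaluation of $m$ is right, and $m\le r_1\le M$ follows from $r_1\ge r_2$ together with \eqref{AMP}. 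The convexity step (sort the closed-up vectors by argument; the turning angles are nonnegative, sum to $2\pi$, and none exceeds $\pi$ since vectors confined to an open half-plane cannot sum to zero) is also sound.

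The one claim you have not justified is the final assertion that, when \eqref{AMP} is strict, ``the construction above produces a genuine convex polygon.'' The intermediate value theorem only guarantees that \emph{some} $(\theta_2,\dots,\theta_N)$ attains the modulus $r_1$; it does not control which one, and the one it hands you may be collinear even under strict inequality. For $r=(1,1,1,1)$ the choice $\theta_2=\theta_3=0$, $\theta_4=\pi$ gives $|2e^{i0}+e^{i\pi}|=1$, and the resulting configuration $+1,+1,-1,-1$ is a degenerate ``polygon'' although $1<3$. So degeneracy is not caused only by equality in \eqref{AMP}, contrary to what your caveat suggests. This is harmless for the paper's purposes --- the proof of Lemma \ref{ELG} uses only the closing identity and the angles between consecutive sides, for which a degenerate configuration still works --- and it is easily repaired (by the inscribed-circle argument you mention, or by perturbing away from the collinear locus when the inequality is strict), but as written the non-degeneracy statement is an assertion rather than a proof.
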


The authors cannot find the reference in which the proof of this lemma is written.
Since this lemma seems well known and can be proved by an elementary argument, we omit the proof.

\begin{proof}[Proof of Lemma \ref{ELG}]
Let $\mathcal{L}$ and $\mathcal{R}$ be the set on the left and right hand side of \eqref{SPC} respectively.
The inclusion $\subset$ is trivial.
We take $z=r \exp(- 2 \pi i \phi) \in \mathcal{R}$ with $0<r \leq \sum_{n =1}^N r_i$ and $\phi \in \mathbb{R}$.
Put $r = r_0 = r_{N+1}$ for convenience.
We consider the line segments $R_1, R_2, \ldots, R_{N+1}$ with the lengths $r_1, r_2, \ldots, r_{N+1}$ respectively.
By assumption \eqref{AMP} and Lemma \ref{ECP},
we can make a convex polygon by connecting our line segments clockwise in order. 
For any $k = 1, 2, \ldots, N$, 
we define by $2 \pi \mu_k$ the positive angle between $R_{k-1}$ and $R_{k}$.
Then we have
\[
r = \sum_{n=1}^N r_n \exp \left( - 2 \pi i \left( \frac{n -1}{2} - \sum_{j =1}^n \mu_j \right)  \right).
\]
Hence we have
\[
z = \sum_{n =1}^N  r_n\exp\left(  - 2 \pi i \left( \frac{n -1}{2} - \sum_{j =1}^n \mu_j  + \phi \right) \right).
\]
Taking $\theta_n = \{ (n -1)/2 - \sum_{j=1}^n \mu_j + \phi \} \in [0,1)$ for any $n =1, 2, \ldots, n$, 
we have $z = \sum_{i =1}^N r_i \exp( - 2 \pi i \theta_i ) \in \mathcal{L}$.
Here $\{x\}$ means the fractional part of $x$.
We find that $0 \in \mathcal{L}$ by the similar argument.
This completes the proof.
\end{proof}

Next, we introduce the following definitions.
\begin{definition}
Let $m$ be a positive integer and $\mathcal{M}$ a finite subset of the set of prime numbers and $\underline{\theta} = (\theta_p)_{p \in \mathcal{M}} \in [0,1)^{\mathcal{M}}$. 
We define the functions $\phi_{m, \s, \mathcal{M}}$, $\Phi_{m, \s, \mathcal{M}}$ by
\begin{gather*}
\phi_{m, \mathcal{M}}(\s, \underline{\theta})
:=\sum_{p \in \mathcal{M}}\frac{\exp(- 2 \pi i \theta_p)}{p^{\sigma}(\log p)^m}, \\
\te_{m, \mathcal{M}}(\s, \underline{\theta})
:=\sum_{p \in \mathcal{M}} \frac{\Li_{m+1}(p^{ - \sigma}\exp(- 2 \pi i \theta_p))}{(\log{p})^{m}}
= \sum_{p \in \mathcal{M}}\sum_{k =1}^\infty\frac{\exp( -2 \pi i k \theta_p)}{k^{m+1}p^{k \sigma}(\log p)^m},
\end{gather*}
respectively.
\end{definition}

\begin{definition}
Let $p_n$ be the $n$-th prime number. 
Put 
\begin{align*}
\underline{\theta}^{(0)}
= \left(\theta_{p_n}^{(0)}\right)_{n \in \mathbb{N}}
=(0, 1/2 , 0 , 1/2, \ldots) \in [0, 1)^{\NN},
\end{align*}
and
\begin{align*}
\gamma_{m, \sigma} = \sum_{p}\sum_{k =1}^{\infty}\frac{\exp( -2 \pi i k \theta_p^{(0)} )}{k^{m+1}p^{k \sigma}(\log p)^m}.
\end{align*}
\end{definition}
Note that the series of the definition of $\gamma_{m, \sigma}$ is convergent.

\begin{proof}[Proof of Proposition \ref{DLm}]
Fix a complex number $a$ and $1/2 \leq \sigma <1$.
Let $U$ be a positive real parameter. 
We take a sufficiently large number $N = N(U, m, \s, a)$ for which the estimates  
\begin{gather*}
| a - \gamma_{m, \sigma} | \leq \sum_{p \in \mathcal{M}}\frac{1}{p^{\sigma}(\log p)^m},\\
\frac{1}{p_{\min}^{\sigma}( \log p_{\min})^m} 
\leq \sum_{p \in \mathcal{M}\setminus \{p_{\min}\}}\frac{1}{p^{\sigma}(\log p)^m}
\end{gather*}
are satisfied, where $\mathcal{M} = \mathcal{M}(U, N)$ is defined as $\Set{p}{p: \text{prime, }U < p \leq N}$, 
and $p_{\min}$ is the minimum of $\mathcal{M}$.
Note that the existence of such $N$ is guaranteed by $\sum_{p}\frac{1}{p^{\sigma}(\log p)^m}=\infty$.
Then, by Lemma \ref{ELG}, the function
\begin{align*}
\varphi_{m, \mathcal{M}}(\s, \cdot)
~:~
[0, 1)^{\mathcal{M}} \ni 
\underline{\theta}
\longmapsto
\varphi_{m, \mathcal{M}}(\s, \underline{\theta}) \in \left\{ z \in \mathbb{C} \; \Big{|} \; |z| \leq \sum_{p \in \mathcal{M}}\frac{1}{p^{\sigma}(\log p)^m} \right\}
\end{align*}
is surjective. 
Hence, there exists some $\underline{\theta}^{(1)} = \underline{\theta}(m, \sigma, U, N)^{(1)} = 
(\theta_p^{(1)})_{p \in \mathcal{M}} \in [0, 1)^\mathcal{M}$ such that
\begin{align*}
\phi_{m, \mathcal{M}}(\s, \underline{\theta}^{(1)})
= a - \gamma_{m, \sigma}.
\end{align*}
By using the prime number theorem, we find that
\begin{align*}
\te_{m, \mathcal{M}}(\s, \underline{\theta}^{(1)})
&= \phi_{m, \mathcal{M}}(\s, \underline{\theta}^{(1)}) 
+ \sum_{p \in \mathcal{M}}\sum_{k =2}^{\infty}\frac{\exp( -2 \pi i k \theta_p^{(1)} )}{k^{m+1}p^{k \sigma}(\log p)^m}\\
&= a - \gamma_{m ,\sigma} + O\left( \frac{1}{(\log U)^m} \right). 
\end{align*}
For any prime number $p$, we put
\begin{align*}
\theta_p^{(2)}
=
\begin{cases}
    \theta_p^{(0)} \quad \text{if} \quad p \notin \mathcal{M}, \\
    \theta_p^{(1)} \quad \text{if} \quad p \in \mathcal{M}.
  \end{cases}
\end{align*}
Then it holds that
\begin{align*}
&\sum_{p \leq N} \frac{\Li_{m+1}(p^{ - \sigma}\exp(- 2 \pi i \theta_p^{(2)}))}{(\log{p})^{m}}\\
=& \sum_{p \in \mathcal{M}}\frac{\Li_{m+1}(p^{ - \sigma}\exp(- 2 \pi i \theta_p^{(1)}))}{(\log{p})^{m}} 
+ \sum_{p \leq U}\frac{\Li_{m+1}(p^{- \sigma}\exp(- 2 \pi i \theta_p^{(0)}))}{(\log{p})^{m}}\\
=& \te_{m, \mathcal{M}}(\s, \underline{\theta}^{(1)})
+ \gamma_{m, \sigma} 
+ \sum_{p > U}\frac{\Li_{m+1}(p^{- \sigma}\exp(- 2 \pi i \theta_p^{(0)}))}{(\log{p})^{m}},
\end{align*}
and additionally, by using the prime number theorem and simple calculations of alternating series, 
\begin{align*}
\sum_{p > U}\frac{\Li_{m+1}(p^{- \sigma}\exp(- 2 \pi i \theta_p^{(0)}))}{(\log{p})^{m}}
&= \sum_{p > U}\frac{\exp(- 2 \pi i \theta_p^{(0)}))}{p^{\sigma}(\log{p})^{m}}
+ O\l( \sum_{p > U}\frac{1}{p^{2\s} (\log{p})^{m}} \r)\\
&\ll \frac{1}{(\log{U})^{m}}.
\end{align*}
Hence, by taking a sufficiently large $U = U(\e)$
and noting the continuity of the function 
$\sum_{p \leq N} \frac{\Li_{m+1}(p^\sigma\exp(- 2 \pi i \theta_p))}{(\log{p})^{m}}$ with respect to 
$(\theta_{p})_{p \leq N} \in [0, 1)^{\pi(N)}$, we obtain this proposition.
\end{proof}

\section{\textbf{Proof of Theorem \ref{DTtetam}}}

In this section, we prove Theorem \ref{DTtetam}. 
Here, we use the following lemma related with Kronecker's approximation theorem.

\begin{lemma}	\label{AKT}
Let $A$ be a Jordan measurable subregion of $[0, 1)^{N}$, and $a_1, \dots, a_N$ be real numbers linearly independent over $\mathbb{Q}$. 
Set, for any $T > 0$, 
\begin{align*}
I(T, A) = \l\{ [0, T] \mid (\{ a_1 t\}, \dots, \{a_{N} t \}) \in A \r\}.
\end{align*}
Then we have
\begin{align*}
\lim_{T \rightarrow +\infty}\frac{\meas(I(T, A))}{T} = \meas(A).
\end{align*}
\end{lemma}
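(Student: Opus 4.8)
The plan is to deduce this lemma from the one-parameter form of Weyl's equidistribution criterion, together with the Jordan measurability of $A$. First I would record the elementary estimate that, for any real $\lambda \neq 0$,
\[
\frac{1}{T}\int_{0}^{T} e^{2\pi i \lambda t}\,dt = \frac{e^{2\pi i \lambda T} - 1}{2\pi i \lambda T} = O\!\left( \frac{1}{|\lambda| T} \right) \longrightarrow 0 \qquad (T \to \infty).
\]
Since $a_{1}, \dots, a_{N}$ are linearly independent over $\QQ$, for every nonzero $\underline{k} = (k_{1}, \dots, k_{N}) \in \ZZ^{N}$ the quantity $\lambda_{\underline{k}} := k_{1}a_{1} + \dots + k_{N}a_{N}$ is nonzero; combined with the identity $e^{2\pi i k_{j}\{a_{j}t\}} = e^{2\pi i k_{j}a_{j}t}$ (valid since $k_{j} \in \ZZ$) this gives
\[
\frac{1}{T}\int_{0}^{T} e^{2\pi i(k_{1}\{a_{1}t\} + \dots + k_{N}\{a_{N}t\})}\,dt \longrightarrow 0 \qquad (T \to \infty).
\]

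Next I would upgrade this to arbitrary continuous test functions on the torus $\TT^{N} = (\RR/\ZZ)^{N}$: by Stone--Weierstrass the trigonometric polynomials are dense in $C(\TT^{N})$ for the supremum norm, so approximating $f \in C(\TT^{N})$ by such polynomials and applying the previous step to each monomial (and noting $\int_{[0,1)^{N}} e^{2\pi i \underline{k}\cdot\underline{x}}\,d\underline{x} = 0$ for $\underline{k}\neq\underline{0}$) shows
\[
\frac{1}{T}\int_{0}^{T} f(\{a_{1}t\}, \dots, \{a_{N}t\})\,dt \longrightarrow \int_{[0,1)^{N}} f(\underline{x})\,d\underline{x} \qquad (T \to \infty);
\]
that is, the curve $t \mapsto (\{a_{1}t\}, \dots, \{a_{N}t\})$ is equidistributed modulo $1$. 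Finally I would pass to the indicator function $\mathbf{1}_{A}$: since $A$ is Jordan measurable its boundary has Lebesgue measure zero, hence $\meas(\mathrm{int}(A)) = \meas(\overline{A}) = \meas(A)$, and for any $\e > 0$ one can find a compact $K \subset \mathrm{int}(A)$ and an open $V \supset \overline{A}$ (taken inside $\TT^{N}$) with $\meas(V \setminus K) < \e$; by Urysohn's lemma there are $g, h \in C(\TT^{N})$ with $\mathbf{1}_{K} \leq g \leq \mathbf{1}_{A} \leq h \leq \mathbf{1}_{V}$. Since $\meas(I(T,A))/T = \frac{1}{T}\int_{0}^{T}\mathbf{1}_{A}(\{a_{1}t\}, \dots, \{a_{N}t\})\,dt$, applying the previous step to $g$ and to $h$ yields
\[
\meas(A) - \e \leq \meas(K) \leq \liminf_{T\to\infty}\frac{\meas(I(T,A))}{T} \leq \limsup_{T\to\infty}\frac{\meas(I(T,A))}{T} \leq \meas(V) \leq \meas(A) + \e,
\]
and letting $\e \to 0$ finishes the proof.

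I do not expect a genuine obstacle here: the whole argument is the classical Weyl machinery. The one step that requires a little care is the last one, where the hypothesis of Jordan measurability is exactly what permits the indicator $\mathbf{1}_{A}$ to be squeezed between continuous functions whose integrals differ by at most $\e$; without some such regularity of $A$ the limit need not exist at all. Alternatively, since this is a standard continuous analogue of Weyl's equidistribution theorem, one could simply invoke it from the literature (for instance from the circle of ideas in \cite{KV1992}) rather than reproving it, and include the sketch above only for the reader's convenience.
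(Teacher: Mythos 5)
Your proposal is correct. Note, however, that the paper does not prove this lemma at all: its entire ``proof'' is the citation ``This lemma is Theorem 1 of Appendix 8 in \cite{KV1992}'' --- which is precisely the fallback option you mention in your last paragraph. Your self-contained argument (Weyl's criterion via the exponential-sum estimate $\frac{1}{T}\int_0^T e^{2\pi i\lambda t}\,dt \to 0$ for $\lambda\neq 0$, upgraded to $C(\TT^N)$ by Stone--Weierstrass, and then to $\mathbf{1}_A$ by squeezing between continuous functions using Jordan measurability) is the standard proof of the cited theorem and is sound; you correctly identify that the linear independence over $\QQ$ is what makes every nonzero frequency $k_1a_1+\dots+k_Na_N$ nonvanishing, and that Jordan measurability is exactly the hypothesis needed for the final approximation step. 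The only point worth a word of care is that the interior, closure, and boundary of $A$ in the last step should be taken in $\TT^N$ rather than in $\RR^N$ (as you note parenthetically); the torus boundary still has measure zero for a Jordan measurable $A$, so nothing breaks.
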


\begin{proof}
This lemma is Theorem 1 of Appendix 8 in \cite{KV1992}
\end{proof}

Let us start the proof of Theorem \ref{DTtetam}.

\begin{proof}[Proof of Theorem \ref{DTtetam}]
Let $\e > 0$ be any small number, $a$ any fixed complex number, $\frac{1}{2} \leq \s < 1$, 
and let $T_0$ be any positive number.
Then, by Proposition \ref{DLm}, we can take a sufficiently large $M_0 = M_0( m, \sigma, a, \e)$ 
so that for any $M \geq M_0$, 
there exists some Jordan measurable subset $\Theta_{1}^{(M)} = \Theta_{1}^{(M)}(m, \s, a, \e, M)$ of $[0, 1)^{M}$ 
such that $\delta_{M} := \meas(\Theta_{1}^{(M)})>0$ and 
\begin{align*}
|S_{M}(\theta_{1}, \dots, \theta_{M};\s, m) - a| < \e
\end{align*}
for any $(\theta_{1}, \dots, \theta_{M}) \in \Theta_1^{(M)}$. 

Here, we define $S_{M}(\theta_1, \dots, \theta_{M};\s, m)$ and $S_{M, N}(\theta_{M+1}, \dots, \theta_{N};\s, m)$ by 
\begin{gather*}
S_{M}(\theta_{1}, \dots, \theta_{M}; \s, m)
= \sum_{n \leq M}\frac{\Li_{m+1}(p_{n}^{-\s} e^{-2\pi i \theta_{n}})}{(\log{p_{n}})^{m}}, \\
S_{M, N}(\theta_{M+1}, \dots, \theta_{N};\s, m)
= \sum_{M < n \leq N}\frac{\Li_{m+1}(p_{n}^{-\s}e^{-2\pi i\theta_{n}})}{(\log{p_{n}})^{m}}.
\end{gather*}
Then, we find that
\begin{align*}
&\int_{0}^{1} \cdots \int_{0}^{1}|S_{M, N}(\theta_{M+1}, \dots, \theta_{N}; \s, m)|^2d\theta_{M+1} \cdots d\theta_{N}\\
&= \int_{0}^{1} \cdots \int_{0}^{1}\l| 
\sum_{M < n \leq N}\sum_{k = 1}^{\infty}\frac{p_{n}^{-\s k}e^{-2\pi i k \theta_{n}}}{k^{m+1}(\log{p_{n}})^{m}}\r|^2 d\theta_{M+1} \cdots d\theta_{N}\\
&= \sum_{M < n_1 \leq N}\sum_{M < n_2 \leq N}\sum_{k_1 = 1}^{\infty}\sum_{k_2 = 1}^{\infty}\Bigg\{
\frac{(p_{n_1} p_{n_2})^{-\s k}}{(k_1 k_2)^{m+1}(\log{p_{n_1}} \log{p_{n_2}})^{m}} \times \\
&\qquadt \qquadf \times \int_{0}^{1} \cdots \int_{0}^{1} e^{-2\pi i (k_1\theta_{n_1} - k_2\theta_{n_2})}d\theta_{M+1} \cdots d\theta_{N} \Bigg\}\\
&= \sum_{M < n \leq N}\sum_{k = 1}^{\infty}\frac{1}{k^{2(m+1)}p_{n}^{2\s k}(\log{p_{n}})^{2m}}
\ll \sum_{M < n \leq N}\frac{1}{p_{n}(\log{p_n})^{2m}}.
\end{align*}
Note that the last sum tends to zero as $M \rightarrow + \infty$. 
Therefore, there exists some large number $M_{1} = M_{1}(m, \e)$ such that, for any $N > M \geq M_{1}$, it holds that
\begin{align*}
\meas\l(\l\{ (\theta_{M+1}, \dots, \theta_{N}) \in [0, 1)^{N-M} \mid |S_{M, N}(\theta_{M+1}, \dots, \theta_{N}; \s, m)| < \e \r\}\r) 
> \frac{1}{2}.
\end{align*}
Here we denote the set of the content of $\meas(\cdot)$ in the above inequality by $\Theta_2^{(M, N)} = \Theta_2^{(M, N)}(M, N, \e)$.

We put $M_2 = \max\{M_0, M_1\}$ and $\Theta_3 = \Theta_{1}^{(M_2)} \times \Theta_2^{(M_2, N)}$ for any $N > M_2$.
Then $\Theta_3$ is a subset of $[0, 1)^{N}$ satisfying $\meas(\Theta_{3}) > \delta_{M_2} / 2$. 
Hence, putting
\begin{align*}
\I(T)
= \l\{ t \in [T_{0}, T] \; \Bigg{|} \; \l( \l\{ \frac{t}{2\pi}\log{p_1} \r\}, \dots, \l\{ \frac{t}{2\pi}\log{p_N} \r\} \r) \in \Theta_3 \r\}
\end{align*}
and applying Lemma \ref{AKT}, for any positive integer $N > M_2$, there exists some large number $T_N  > T_{0}$ 
such that $\meas(\I(T)) > \delta_{M_2} T / 2$ holds for any $T \geq T_N$.
On the other hand, by Proposition \ref{ALD}, there exists a large number $N_{0} = N_{0}(\e, \delta_{M_2})$ such that
\begin{align*}
\meas\l\{ t \in [T_{0}, T] \; \Bigg{|} \; \bigg| \te_{m}(\s + it) 
- \sum_{n \leq N}\frac{\Li_{m+1}(p_{n}^{-\s - it})}{(\log{p_{n}})^{m}} \bigg| < \e \r\}
> (1 - \delta_{M_2} / 4)T
\end{align*}
for any $N \geq N_0$, $T \geq p_N^{135}$.

Therefore, for any $N \geq \max\{N_{0}, M_2 +1\}$, $T \geq \max\{T_N, p_N^{135}\}$, there exists some $t_{0} \in [T_{0}, T]$ such that
\begin{align*}
\l( \l\{ \frac{t_{0}}{2\pi}\log{p_1} \r\}, \dots, \l\{ \frac{t_{0}}{2\pi}\log{p_N} \r\} \r) \in \Theta_3,
\end{align*}
and
\begin{align*}
\l| \te_{m}(\s + it_{0}) - \sum_{n \leq N}\frac{\Li_{m+1}(p_{n}^{-\s - it_{0}})}{(\log{p_{n}})^{m}} \r| < \e.
\end{align*}
Then we have
\begin{align*}
&|\te_{m}(\s + it_{0}) - a|\\
&\leq \bigg| \te_{m}(\s + it_{0}) - \sum_{n \leq N}\frac{\Li_{m+1}(p_{n}^{-\s} e^{-it_{0}\log{p_n}})}{(\log{p_{n}})^{m}} \bigg|
+ \l| \sum_{n \leq M_{2}}\frac{\Li_{m+1}(p_{n}^{-\s} e^{-it_{0}\log{p_n}})}{(\log{p_{n}})^{m}} - a \r|\\
&\qquad + \l| \sum_{M_{2} < n \leq N} \frac{\Li_{m + 1}(p_{n}^{-\s} e^{-it_{0}\log{p_n}})}{(\log{p_{n}})^{m}} \r|
< 3\e.
\end{align*}
This completes the proof of Theorem \ref{DTtetam}. 
\end{proof}

\section{\textbf{Proof of Theorem \ref{DTetam}}}

In this section, we prove Theorem \ref{DTetam}. 
Here, we prepare the following lemma.

\begin{lemma}	\label{BFeY}
Let $\s \geq 1/2$ and $m$ be a positive integer.
Then we have
\begin{align*}
\eta_{m}(s) = Y_{m}(s) + O_{m}(\log{t}).
\end{align*}
\end{lemma}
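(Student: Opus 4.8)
The plan is first to reduce the lemma, by Lemma~\ref{BFQm}, to the uniform estimate
\[
\te_{m}(\s+it)\ll_{m}\log t\qquad(\s\geq 1/2,\ t\geq 2).
\]
Indeed, Lemma~\ref{BFQm} gives $\eta_{m}(\s+it)-Y_{m}(\s+it)=i^{m}\te_{m}(\s+it)$, with $Y_m$ as in \eqref{def_Y}, so the assertion $\eta_m(s)=Y_m(s)+O_m(\log t)$ is equivalent to this bound. To prove it I would start from \eqref{etatF} and split the integral at $\a=2$. For $\a\geq 2$ one has $|\log\zeta(\a+it)|\leq\sum_{n\geq 2}\Lam(n)n^{-\a}/\log n\ll 2^{-\a}$, so the tail $\frac{1}{(m-1)!}\int_{2}^{\infty}(\a-\s)^{m-1}\log\zeta(\a+it)\,d\a$ is $O_{m}(1)$; and since $(\a-\s)^{m-1}=O_m(1)$ for $\s\leq\a\leq 2$, the whole matter reduces to showing
\[
\int_{1/2}^{2}\left|\log\zeta(\a+it)\right|\,d\a\ll\log t.
\]

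For this estimate I would invoke the classical partial-fraction formula
\[
\frac{\zeta'}{\zeta}(s)=\sum_{|\gamma-t|\leq 1}\frac{1}{s-\rho}+O(\log t)\qquad(-1\leq\Re{s}\leq 2,\ t\geq 2),
\]
where $\rho=\b+i\gamma$ runs over the nontrivial zeros. Fix such a $t$ which is not the ordinate of any zero (the exceptional values of $t$ being treated at the end by continuity in $t$). Then the horizontal segment from $2+it$ to $\s+it$ contains no zero; integrating the identity above along it and using $\log\zeta(2+it)=O(1)$ yields
\[
\log\zeta(\s+it)=-\sum_{|\gamma-t|\leq 1}\bigl(\log(2+it-\rho)-\log(\s+it-\rho)\bigr)+O(\log t),
\]
each $\log(u+it-\rho)$ being taken by continuous variation as $u$ runs over $[\s,2]$. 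Since $1\leq|2+it-\rho|\leq\sqrt 5$ we have $\log(2+it-\rho)=O(1)$, and the number of $\rho$ with $|\gamma-t|\leq 1$ is $\ll\log t$, whence $\log\zeta(\s+it)=\sum_{|\gamma-t|\leq 1}\log(\s+it-\rho)+O(\log t)$ uniformly in $\s\in[1/2,2]$.

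It then remains to integrate this in absolute value over $\s=\a\in[1/2,2]$:
\[
\int_{1/2}^{2}\left|\log\zeta(\a+it)\right|\,d\a\leq\sum_{|\gamma-t|\leq 1}\int_{1/2}^{2}\left|\log(\a+it-\rho)\right|\,d\a+O(\log t).
\]
For each zero, $\left|\log(\a+it-\rho)\right|\leq\left|\log|\a+it-\rho|\right|+\pi$; since $|\a+it-\rho|\leq\sqrt 5$ the positive part of $\log|\a+it-\rho|$ contributes $O(1)$, while whenever $|\a+it-\rho|<1$ one has $\left|\log|\a+it-\rho|\right|\leq\left|\log|\a-\b|\right|$, and $\int_{1/2}^{2}\left|\log|\a-\b|\right|\,d\a$ is bounded uniformly for $\b\in[0,1]$. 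Thus each inner integral is $O(1)$, and summing over the $\ll\log t$ relevant zeros gives the required bound, which combined with the reduction above proves the lemma.

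The step I expect to be the main obstacle is exactly the middle range $1/2\leq\a\leq 2$: one must control the zeros of $\zeta$ lying close to the segment $\{\a+it:1/2\leq\a\leq 2\}$, that is, the possibly large negative values of $\log|\zeta|$ there, which is why the $\zeta'/\zeta$ formula together with the local zero count (the number of $\rho$ with $|\gamma-t|\leq 1$ being $\ll\log t$) is needed. The tail $\a\geq 2$ and the reduction through Lemma~\ref{BFQm} are routine.
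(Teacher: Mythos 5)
Your proof is correct, and it is worth noting that it is genuinely more self-contained than what the paper does: the paper's entire proof of this lemma is a citation of equation (2.2) in \cite{II2019}, so you have actually supplied an argument for a fact the paper imports wholesale. Your route --- reducing via Lemma \ref{BFQm} to the bound $\te_{m}(\s+it)\ll_{m}\log{t}$, splitting \eqref{etatF} at $\a=2$ (where the tail is $O_m(1)$ by the Dirichlet series of $\log\zeta$ against the exponential decay $2^{-\a}$), and then establishing $\int_{1/2}^{2}|\log\zeta(\a+it)|\,d\a\ll\log{t}$ from the partial-fraction formula for $\zeta'/\zeta$, the boundedness of $\int_{1/2}^{2}\bigl|\log|\a-\b|\bigr|\,d\a$ uniformly in $\b\in[0,1]$, and the local zero count $\ll\log t$ --- is the classical argument behind Littlewood's estimate for $\frac{1}{\pi}\int_{1/2}^{\infty}\log|\zeta(\a+it)|\,d\a$ (Titchmarsh, Chapter IX), and is presumably close to how the cited reference proves its equation (2.2). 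The only point I would tighten is your parenthetical that exceptional $t$ (ordinates of zeros) are ``treated by continuity'': the function $\te_{m}(\s+it)$ is in fact \emph{discontinuous} in $t$ exactly at ordinates $\gamma$ of zeros with $\b>\s$ --- those jumps are precisely what the term $Y_m$ in Lemma \ref{BFQm} records, since $\eta_m$ is continuous while the $k=0$ part of $Y_m$ jumps. This does no harm: at such a $t$ the representation $\log\zeta(\a+it)=\sum_{|\gamma-t|\leq 1}\log(\a+it-\rho)+O(\log t)$ still holds for all but finitely many $\a$ and your integral estimate applies verbatim; alternatively, each jump of $\te_m$ has size $\ll_m\log t$ (it is bounded by the number of zeros at that height), so passing to a one-sided limit also suffices. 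With that sentence repaired, the proof is complete.
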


\begin{proof}
This lemma is equation (2.2) in \cite{II2019}.
\end{proof}

\begin{proof}[Proof of Theorem \ref{DTetam}]
First, we show Theorem \ref{DTetam} in the case $m=1$.
If the number of zeros $\rho = \b + i\gamma$ of $\zeta(s)$ with $\b > \s$ is finite, then there exists a sufficiently large $T_{0}$ such that
$Y_{1}(\s+it) \equiv b$ for $t \geq T_{0}$, where $b$ is a positive real number.
Therefore, by Lemma \ref{BFQm}, we have
\begin{align*}
\int_{0}^{t}\log{\zeta(\s+it')}dt' = i\te_{1}(\s+it) + b
\end{align*}
for any $t \geq T_{0}$.
By this formula, we obtain
\begin{align*}
\l\{ \int_{0}^{t}\log{\zeta(\s+it')}dt' \mid t \in [0, \infty) \r\}
&\supset \l\{ \int_{0}^{t}\log{\zeta(\s+it')}dt' \mid t \in [T_{0}, \infty) \r\}\\
&= \l\{ i\te_{1}(\s+it) + b \mid t \in [T_{0}, \infty) \r\}.
\end{align*}
If a set $A \subset \CC$ is dense in $\CC$, then, for any $c_1\in \CC \setminus \{
0 \}$ and $c_2 \in \CC$, 
the set $\{ c_1a + c_2 \mid a \in A \}$ is also dense in $\CC$.
By this fact and Theorem \ref{DTtetam}, 
the set $\{ i\te_{1}(\s+it) + b \mid t \in [T_{0}, \infty) \}$ is dense in $\CC$.
Thus, the set $\l\{ \int_{0}^{t}\log{\zeta(\s+it')}dt' \mid t \in [0, \infty) \r\}$ is dense in $\CC$ 
under this assumption.

Next, for $m \in \ZZ_{\geq 2}$, we show the equivalence of (I) and (II).
The implication (I) $\Rightarrow$ (II) is clear 
since the equation $\eta_{m}(\s+it) = i^{m}\te_{m}(\s+it)$ holds by assuming (I).

In the following, we show the inverse implication (II) $\Rightarrow$ (I). 
By Lemma \ref{BFeY}, if (I) is false, then the estimate $|\eta_{m}(\s+it)| \gg_{m} t^{m-1}$ holds. 
Therefore, for some $T_2 > 0$, we have
\begin{align*}
\overline{\l\{\eta_{m}(\s+it) \mid t \in [T_{2}, \infty) \r\}} \subset \CC \setminus \l\{ z \mid |z| \leq 1 \r\}.
\end{align*}
Here, $\bar{A}$ means the closure of the set $A$. 
Additionally, we see that 
\begin{align*}
\l\{ z \mid |z| \leq 1 \r\} \not\subset \overline{\l\{\eta_{m}(\s+it) \mid t \in [0, T_{2}) \r\}}
\end{align*}
since $\mu\l(\overline{\l\{\eta_{m}(\s+it) \mid t \in [0, T_{2} \r\}}\r) = 0$, where $\mu$ is the Lebesgue measure in $\CC$.
Hence, if (I) is false, then the set $\l\{\eta_{m}(\s+it) \mid t \in [0, \infty) \r\}$ is not dense in $\CC$.
Thus, we obtain the implication (II) $\Rightarrow$ (I).
\end{proof}

\section{\textbf{Note}}
In this paper, we discussed the denseness of the values $\te_{m}(\s + it)$, $t \in [0, \infty)$ 
without any probabilistic argument. 
The authors have already obtained the weak convergent limit of a certain sequence of probability measures 
and given the same results by combining the method of this paper and a probabilistic argument. 
The method would be suitable for generalization to other zeta and $L$-functions.
Some people may be interested in the existence of the probability density function of the weak 
convergent limit because
if we could obtain it, 
we may obtain more deep conclusions such as \cite{HM1999}, \cite{L2011}, \cite{LLR2019}.
However, it is difficult to find the probability density function of the limits in 
general cases, and it is actively studied even today such as \cite{IM2011Q}, \cite{IM2011M}, \cite{MU2019}, \cite{M2020}, 
and also good survey \cite{M2018}.
Here, we will mention Matsumoto's work.
In \cite{M1990}, he showed that there exist weak convergent limits in a general class of zeta-functions 
as the first step of the generalization of the theorem of Bohr-Jessen \cite{BJ1930}. 
On the other hand, he could not prove the existence of probability density functions in the general case.
His study suggests that the discussion not involving the probability density function is 
sometimes useful when we consider the generalization to other zeta and $L$-functions.
From these backgrounds, the authors adopted the method not involving
the probability density function as the first step of the study of the value-distribution 
of iterated integrals of the logarithm of the Riemann zeta-function.
We shall have further deep arguments including the probability density function 
in \cite{EIM2020}.

\begin{ackname}
The authors would like to deeply thank Professor Kohji Matsumoto for many useful comments and suggestions.
They would also like to thank Mr Masahiro Mine for useful discussion.
The second author is supported by Grant-in-Aid for JSPS Research Fellow (Grant Number: 19J11223).
\end{ackname}

\end{document}